\newtheorem{lemma}{Lemma}[section]
\newtheorem{theorem}[lemma]{Theorem}
\newtheorem{definition}[lemma]{Definition}
\newtheorem{proposition}[lemma]{Proposition}
\newtheorem{corollary}[lemma]{Corollary}
\newtheorem{remark}[lemma]{Remark}
\newtheorem{example}[lemma]{Example}
\begin{document}

\begin{center}
{\LARGE Observable Optimal State Points of Subadditive Potentials }

\vspace{.3cm}
Eleonora Catsigeras\footnote {Instituto de Matem\'{a}tica, Universidad de la
Rep\'{u}blica, Montevideo, Uruguay,
{\tt eleonora@fing.edu.uy}} \ and Yun Zhao\footnote{Department of mathematics, Soochow University, Suzhou 215006, Jiangsu, P.R.China,
 {\tt zhaoyun@suda.edu.cn}}
\end{center}



\begin{abstract}

For a sequence of subadditive potentials,
 a method of choosing
 state points with negative growth rates for an ergodic
dynamical system was given in \cite{dai}. This paper first
generalizes this result to the non-ergodic dynamics, and then
proves that under some mild additional  hypothesis, one can choose
points with negative growth rates from a positive Lebesgue measure
set, even if the system does not preserve any measure that is
absolutely continuous with respect to Lebesgue measure.
\end{abstract}

\noindent {\small{\em Key words and phrases} \ Optimal state points,
Subadditive potentials,  Observable measures}

\noindent {\small {MSC:} 37A30; 37L40}

\section{Introduction}

Let $f:M\rightarrow M$ be a continuous map on a compact,
finite-dimensional manifold $M$, and $m$ a normalized Lebesgue
measure on $M$.  We denote with $\mathcal{M}$ the set of all the
Borel probability measures on $M$, provided with the weak$^*$
topology, and with $\mathrm{dist}^*$ a metric inducing this
topology. The terms $\mathcal{M}_f$ and $\mathcal{E}_f$ denote the
space of $f-$invariant Borel probability measures and the set of
$f-$invariant ergodic Borel probability measures, respectively.

For each point $x\in M$, we define the empirical   measures
\[
\delta_{x,n}=\frac 1 n \sum_{j=0}^{n-1}\delta_{f^j(x)}
\]
where $\delta_x$ is the Dirac measure at $x$. We denote with
$\mathcal{V}_f(x)$
  the set of all the Borel probabilities in $\mathcal{M}$
that are the weak$^*$ limits of the empirical measures. It is well
known  that $\mathcal{V}_f(x) \subset \mathcal{M}_f$.

A sequence $\Phi=\{ \phi_n\}_{n\geq 1}$ of continuous real
functions  is a  subadditive potential on $M$, if
\[
\phi_{n+m}(x)\leq \phi_n(x) +\phi_m(f^n x)
~~~~{\mathrm{for}}~{\mathrm{all}}~x\in M,~ n,m\in {\mathbb{N}}.
\]
For $\mu\in \mathcal{M}_f$, it follows, from Kingman's
sub-additive ergodic theorem (see \cite{kin} or \cite[theorem
10.1]{wal}), that
\[ \Phi^*(x):=\lim\limits_{n\rightarrow\infty}
 \frac{1}{n}  \phi_n (x)~~\hbox{for}~\mu-{\hbox{a.e.}}~x\in M
 \]
and $\int \Phi^*(x) \mathrm{d}\mu=\inf_{n\geq 1}\frac 1 n \int
\phi_n  \mathrm{d}\mu$. The term $\Phi^*(x)$ is called \em the
growth rate \em of the subadditive potentials $\Phi=\{\phi_n\}$ at
$x$, defined as the existing limit for a set of full measure for
any invariant measure. All along the paper we will assume that the
growth rate  is negative, i.e. $\Phi^*(x)<0$ for $\mu-$almost all
$x$ for one or more invariant measures $\mu$.  We are interested
to select other state points $x \in M$ for which the {\it largest
growth rate} is still negative, namely:
$$\widetilde{\Phi}^*(x):=\limsup_{n\rightarrow\infty}\frac 1 n
\phi_n(x)<0.$$  The points $x \in M$ such that
$\widetilde{\Phi}^*(x) < 0$, if exist,  are called \em optimal
state points \em for the given sequence $\Phi$ of subadditive
potentials. We will say that the set of optimal state points is
\em observable, \em if its Lebesgue measure is positive. We notice
that we are neither  assuming that the system preserves the
Lebesgue measure, nor any measure that is absolutely continuous
with respect to Lebesgue measure.

The problem of the abundance of optimal state points   arises for
example  when studying the stability of linear control systems
\cite{dhx}. We are also interested to describe, and to find
Lebesgue positive subsets of state points   $x \in M$, that are
not necessarily optimal, but for which the  \em smallest growth
rate  \em is negative. Namely:
$$\widehat {\Phi}^*(x) := \liminf_{n \rightarrow \infty} \frac 1 n \phi_n(x) < 0.$$

Dai \cite{dai} gave a method to choose optimal   points. We
rewrite his result in our setting as the following theorem:

\begin{theorem} \em{(\cite{dai}) }\em
Let $f:M\rightarrow M$ be a continuous map on a compact,
finite-dimensional manifold $M$, $\mu$ an $f-$invariant ergodic
measure, and $\Phi=\{ \phi_n\}_{n\geq 1}$  a  subadditive
potential. If the growth rates of $\Phi$ satisfy
\[
\Phi^*(x):=\lim\limits_{n\rightarrow\infty}
 \frac{1}{n}  \phi_n (x)<0~~~\mu-{\hbox{a.e.}}~x\in M
\]
then $\widetilde{\Phi}^*(x)<0$ for all $x$ in the basin $B(\mu)$
of the measure $\mu$, where   $B(\mu):=\{x\in M:
\mathcal{V}_f(x)=\{\mu\}\}.$
\end{theorem}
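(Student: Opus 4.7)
I would combine two ingredients: the variational formula from Kingman's theorem, which lets me replace the full sequence $\Phi$ by a single continuous function $\phi_N$ with negative $\mu$-integral, and the fact that $x \in B(\mu)$ converts weak$^*$ convergence of empirical measures into a precise statement about Birkhoff averages of that one function. Subadditivity is then used to push this control from $\phi_N$ back to $\phi_n$ for all large $n$.

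\textbf{Step 1: choosing a good block length.} Since $\mu$ is ergodic and $\Phi^*(x)<0$ for $\mu$-a.e.\ $x$, the function $\Phi^*$ is $\mu$-a.e.\ equal to the constant $\int \Phi^*\,d\mu<0$. By the Kingman identity quoted in the introduction, $\int \Phi^*\,d\mu=\inf_{n\geq 1}\tfrac1n\int\phi_n\,d\mu$, so one can pick an integer $N$ and an $\epsilon>0$ with $\tfrac1N\int\phi_N\,d\mu<-\epsilon$.

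\textbf{Step 2: the subadditive upper bound.} For each shift $i\in\{0,\ldots,N-1\}$, iterated subadditivity applied to blocks of length $N$ along the orbit of $x$ yields an estimate
\[
\phi_n(x)\ \leq\ \phi_i(x)\ +\ \sum_{j=0}^{q_i-1}\phi_N(f^{i+jN}x)\ +\ \phi_{r_i}(f^{i+q_iN}x),
\]
where $n-i=q_iN+r_i$ with $0\leq r_i<N$. Averaging these $N$ inequalities and observing that the indices $i+jN$ jointly cover $\{0,1,\ldots,n-1\}$ up to an $O(N)$ boundary set, one obtains
\[
N\,\phi_n(x)\ \leq\ \sum_{k=0}^{n-1}\phi_N(f^kx)\ +\ C(N),
\]
with $C(N)$ bounded independently of $n$, since each $\phi_m$ ($0\leq m<N$) is continuous on the compact manifold $M$.

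\textbf{Step 3: passing to the limit, and the delicate point.} The hypothesis $x\in B(\mu)$ means $\delta_{x,n}\to\mu$ weak$^*$, so continuity of $\phi_N$ gives $\tfrac1n\sum_{k=0}^{n-1}\phi_N(f^kx)=\int\phi_N\,d\delta_{x,n}\to\int\phi_N\,d\mu$. Dividing the inequality of Step~2 by $n$ and taking $\limsup_{n\to\infty}$ yields $N\widetilde{\Phi}^*(x)\leq\int\phi_N\,d\mu<-N\epsilon$, hence $\widetilde{\Phi}^*(x)<-\epsilon<0$. The only part that requires genuine care is the combinatorial bookkeeping in Step~2: one must verify that the $N$ shifted block decompositions tile the orbit segment of length $n$ exactly once, modulo an $O(N)$-sized boundary whose $\phi_m$-contributions are absorbed into the constant $C(N)$. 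Once this tiling check is clean, the rest is a transparent application of the variational formula and weak$^*$ convergence in the basin.
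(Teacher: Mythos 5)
Your proof is correct and follows essentially the same route as the paper's proof of Theorem \ref{theoremA} (which specializes to this statement): a good block length is chosen via Kingman's formula, the iterated subadditive decomposition (this is exactly Lemma \ref{sub}) converts $\phi_n$ into Birkhoff sums of $\phi_N$ up to a bounded error, and weak$^*$ convergence of the empirical measures finishes the argument. The only cosmetic differences are that you invoke the Kingman identity directly and argue without contradiction, whereas the paper first truncates to $\psi_n=\max\{-n,\phi_n\}$ so that Fatou's lemma applies and then derives a contradiction from a subsequence; in the ergodic, full-basin setting this extra care is not needed and your streamlined version is sound.
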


The above theorem states that all the points in the basin of an
ergodic measure $\mu$ have negative largest growth rates. A
natural question arises with respect to all the other invariant
measures:

 {\bf Question 1:} {\it How can we ensure the existence of state
points $x$ that have   negative largest growth rates,  if the
measure is $f-$invariant but not necessarily ergodic? }

For an experimenter  it would be interesting to know whether the
subadditive potentials have negative largest (or at least
smallest) growth rates in a set with positive Lebesgue measure.
 To achieve that result and after   Dai's Theorem it would be enough that
 the basin $B(\mu)$ of the ergodic measure $\mu$ has positive Lebesgue measure.
 But, although $B(\mu)$ has full $\mu-$measure,
its Lebesgue measure may be zero, unless $\mu$ is   {\it physical
 or SRB}. Nevertheless, general continuous dynamical systems  may
not have such a physical or SRB measure.  Thus, it arises the
following
 nontrivial question:

{\bf Question 2:} {\it Under what conditions, even if no physical
or SRB measure exists,  the subadditive potentials have negative
largest   growth rates (or at least negative smallest growth
rates) on a positive Lebesgue measure set?}

The following theorem provides a strong result in this direction.
It was proved independently by Schreiber \cite[theorem 1]{sch} and
Sturman and Stark \cite[theorem 1.7]{ss}, and more recently Dai
\cite{dai} gave a   proof  by a simple method. We rewrite it in
our setting, with the following statement:

\begin{theorem} \em{ (\cite{sch,ss})}\em \label{theoremShcreiber} Let $f:M\rightarrow M$ be a continuous map on a compact,
finite-dimensional manifold $M$, and $\Phi=\{ \phi_n\}_{n\geq 1}$
a  subadditive potential. If for each $f-$invariant ergodic
measure $\mu$  the growth rates
\[
\Phi^*(x):=\lim\limits_{n\rightarrow\infty}
 \frac{1}{n}  \phi_n (x)<0~~~\mu-{\hbox{a.e.}}~x\in M
\]
then $\widetilde{\Phi}^*(x)<0$  for  all $x\in M$.
\end{theorem}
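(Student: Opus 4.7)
The natural approach is proof by contradiction, combining a variational upper bound for $\limsup \phi_n(x)/n$ with the ergodic decomposition. Suppose, aiming at a contradiction, that some $x_0 \in M$ satisfies $L := \widetilde{\Phi}^*(x_0) \geq 0$. Choose $n_k \to \infty$ with $\phi_{n_k}(x_0)/n_k \to L$ and, by weak$^*$ compactness of $\mathcal{M}$, pass to a further subsequence so that $\delta_{x_0, n_k} \to \mu$ for some $\mu \in \mathcal{V}_f(x_0) \subseteq \mathcal{M}_f$. The goal is then to deduce
\[
L \;\leq\; \int \Phi^* \, d\mu \;<\; 0,
\]
contradicting $L \geq 0$.

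The core estimate $L \leq \int \Phi^* \, d\mu$ would be established by a shift-averaging argument based on subadditivity. Fix $m \geq 1$. For each shift $s \in \{0,1,\ldots,m-1\}$, write $n-s = q_s m + r_s$ with $0 \leq r_s < m$ and iterate subadditivity to obtain
\[
\phi_n(x_0) \;\leq\; \phi_s(x_0) + \sum_{i=0}^{q_s-1}\phi_m\bigl(f^{s+im} x_0\bigr) + \phi_{r_s}\bigl(f^{s+q_s m}x_0\bigr).
\]
Summing over $s$, dividing by $m$, and reindexing the double sum via $j = s+im$, each $j \in \{0,\ldots,n-m\}$ is covered exactly once; the remaining endpoint terms are bounded in absolute value by a constant $C(m)$ depending only on $m$ and $\max_{0 \leq k \leq m}\|\phi_k\|_\infty$. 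This gives
\[
\frac{\phi_n(x_0)}{n} \;\leq\; \frac{1}{m}\int \phi_m \, d\delta_{x_0, n} \;+\; \frac{C(m)}{n}.
\]
Letting $n = n_k \to \infty$ and using continuity of $\phi_m$ to exploit weak$^*$ convergence yields $L \leq \frac{1}{m}\int \phi_m \, d\mu$; taking the infimum over $m$ and invoking $\int \Phi^* d\mu = \inf_m \frac{1}{m}\int \phi_m d\mu$ (from Kingman's theorem, applicable since $\mu \in \mathcal{M}_f$) delivers the desired inequality.

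To convert the hypothesis, which concerns only ergodic measures, into a statement about the possibly non-ergodic $\mu$, I would invoke the ergodic decomposition $\mu = \int_{\mathcal{E}_f}\mu_\omega \, d\tau(\omega)$. For each ergodic $\mu_\omega$, Kingman's theorem makes $\Phi^*$ constant $\mu_\omega$-a.e., equal to $\int \Phi^* d\mu_\omega$, and this constant is strictly negative by hypothesis. Integrating against $\tau$ produces $\int \Phi^* \, d\mu < 0$, which closes the contradiction.

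The main obstacle is the shift-averaging bookkeeping: one must verify that the reindexed double sum really covers each $j$ at most once, so that the right-hand side is genuinely dominated by $\int\phi_m \, d\delta_{x_0, n}$ up to an additive boundary correction of order $O(m\|\phi_m\|_\infty)$, and, crucially, that the limit $n \to \infty$ be taken \emph{before} the infimum over $m$, since the error $C(m)/n$ degrades as $m$ grows. Once this is in hand, the remaining ingredients---the weak$^*$ extraction, Kingman's formula, and the ergodic decomposition---are standard.
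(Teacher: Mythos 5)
The paper does not prove Theorem \ref{theoremShcreiber} itself; it only cites \cite{sch,ss} and \cite{dai}. But your argument is essentially Dai's method, which the paper reproduces almost verbatim in the proofs of Theorems \ref{theoremA}, \ref{theoremB} and \ref{theoremC}: extract a weak$^*$ convergent subsequence of empirical measures $\delta_{x_0,n_k}\to\mu\in\mathcal{M}_f$, use the shift-averaging estimate (your display is exactly the paper's Lemma \ref{sub}), pass to the limit in $n$ to get $L\leq\frac{1}{m}\int\phi_m\,d\mu$, and then show the right-hand side can be made negative. Your proof is correct.

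There are two genuine differences from the paper's implementation worth noting. First, the paper's Theorems \ref{theoremA}--\ref{theoremC} take as hypothesis that $\Phi^*<0$ $\mu$-a.e.\ for the \emph{specific} invariant measure $\mu$ arising as the limit, so no ergodic decomposition is needed; for Theorem \ref{theoremShcreiber} the hypothesis is only about ergodic measures, and the limit measure need not be ergodic, so your use of the ergodic decomposition plus Kingman applied to each ergodic component $\mu_\omega$ (giving $\int\Phi^*d\mu_\omega<0$, possibly $-\infty$, and hence $\int\Phi^*d\mu<0$ by disintegration) is the necessary extra step, and you carry it out correctly. Second, the paper replaces $\phi_n$ by the truncation $\psi_n=\max\{-n,\phi_n\}$ so that $\frac{1}{n}\psi_n\geq -1$ and reverse Fatou applies cleanly. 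You skip this and take the infimum over $m$ after the limit in $n$, then invoke the Kingman identity $\inf_m\frac{1}{m}\int\phi_m\,d\mu=\int\Phi^*d\mu$; since $\phi_m$ is bounded (continuous on a compact manifold) each $\frac{1}{m}\int\phi_m\,d\mu$ is finite and the argument is sound even when $\int\Phi^*d\mu=-\infty$, because then $L\leq-\infty$ contradicts $L\geq 0$ all the same. The two routes buy the same thing; the truncation is a convenience, not a necessity, in this setting. Your warning about taking $n\to\infty$ before $\inf_m$ is the right point to emphasize, and your bookkeeping claim (each $j\leq n-m$ covered exactly once, boundary terms $O(m\|\phi_m\|_\infty)$) is accurate.
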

 Cao
\cite{cao} also extended the result in the latter theorem to
random dynamical systems.

As said above, when observing the system  the experimenter may
just need to know that the subadditive potentials have negative
growth rates in a positive Lebesgue measure set of state points
$x$, instead in the whole manifold. That is why we are interested
to find the conditions, weaker than the hypothesis  of Theorem
\ref{theoremShcreiber}, to ensure that $\widetilde \Phi^*(x)< 0$,
or at least $\widehat \Phi^*(x) < 0$, just for a Lebesgue-positive
set of state points $x \in M$.

The questions 1 and 2 above are the   motivations    and tasks of
this paper. To search for an answer to Question 2  we apply the
recent works in \cite{ce} and \cite{c}. Motivated and applying
those results, and adapting the arguments of Dai \cite{dai},
   we    give     positive answers to
Questions 1 and 2  in  Theorems \ref{theoremA}, \ref{theoremB}
and \ref{theoremC},    and Corollaries \ref{corollaryA},
\ref{CorollaryB} and \ref{corollaryC} of this paper.

The exposition is organized as follows: Section 2 provides some
preliminary definitions and the statements of our main results in
Theorems \ref{theoremA}, \ref{theoremB} and \ref{theoremC}.
Section 3 provides their detailed proofs and their corollaries.
Section 4 gives   additional remarks about  the main results, and
some examples to illustrate them.

\section{Preliminaries and statements of the new results}

This section provides  some definitions  and  the statements of
the main results of this paper.

To give a positive answer to question 1, we give the definition of
strong basin of $\epsilon-$attraction of an invariant measure, as
follows:

\begin{definition} \label{strongobs} \em
Given a probability measure $\mu\in \mathcal{M}$  and a real
number $\epsilon>0$, the following set
\[
\mathcal{S}_{\epsilon}(\mu):=\{ x\in M: \mathcal{V}_f(x)\subset
\mathcal{N}_{\epsilon}(\mu)\}
\]
is called \em strong basin of $\epsilon-$attraction of $\mu$, \em
where $\mathcal{N}_{\epsilon}(\mu)$ denotes the
$\epsilon-$neighborhood of $\mu$ under the metric
$\mathrm{dist}^*$. Furthermore, if the set
$\mathcal{S}_{\epsilon}(\mu)$ has positive Lebesgue measure for
all $\epsilon>0$, then the measure $\mu$ is called   \em strong
observable \em  for $f$.
\end{definition}

To answer question 2, we first recall  the definition of
observable measure   introduced   in \cite{ce}.
\begin{definition} \label{observable} \em
Given a probability measure $\mu\in \mathcal{M}$ and a real number
$\epsilon>0$, the following set
\[
A_{\epsilon}(\mu):=\{ x\in M: \mathcal{V}_f(x)\cap
\mathcal{N}_{\epsilon}(\mu)\neq \emptyset\}
\]
is called \em basin of $\epsilon-$attraction of $\mu$. \em
Furthermore, if the set  $A_{\epsilon}(\mu)$ has positive Lebesgue
measure for all $\epsilon>0$, then the measure $\mu$ is called \em
observable \em for $f$. Let $\mathcal{O}_f$ denote the set of all
the observable measures for $f$.
\end{definition}

\begin{remark}

\em From the definitions above, it is immediate that   any strong
observable measure is observable. Nevertheless  given an
$f-$invariant measure $\mu$  its (strong)  basin of
$\epsilon-$attraction may be empty for all $\epsilon>0$. If $\mu$
is ergodic  then  its
 strong  basin of $0-$attraction (which is obviously   included in
its strong  basin of $\epsilon-$attraction for all positive
$\epsilon$) is not empty since it includes $\mu-$almost all the
points.
\end{remark}

It is easy to check that each (strong) observable measure is
$f-$invariant, since it can be approximated by invariant measures
and $\mathcal{M}_f$ is weak$^*$ compact. See \cite{ce} for more
properties of observable measures. In order to give a class of
systems for which the  strong  basins of $\epsilon-$attraction are
not empty for all $\epsilon>0$ and for all $\mu \in
\mathcal{M}_f$, we first recall Bowen's specification property
(see definition 18.3.8 in \cite{kh}):

\begin{definition} \em
 A continuous map $f:M\rightarrow M$ satisfies \em
Bowen's specification property \em if for each $\epsilon>0$, there
exists an integer $m=m(\epsilon)$ such that for any finite
collection $\{I_j:=[a_j,b_j]\subset \mathbb N:j=1,2,...,k\}$ of
finite intervals of natural numbers such that  $a_{j+1}-b_j\geq
m(\epsilon)$ for $j=1,2,...,k-1$, for any $x_1,x_2,...,x_k$ in
$M$, and for any $p\geq b_k-a_1+m(\epsilon)$, there exists a
periodic point $x\in M$ of
  period at least $p$, satisfying
\begin{eqnarray*}
d(f^{l+a_j}(x),f^l(x_j))<\epsilon  \mbox{ for all
$l=0,1,...,b_j-a_j $ and every $j=1,2,...,k.$}
\end{eqnarray*}
\end{definition}

\begin{proposition}Let $f:M\rightarrow M$ be a continuous map on a compact
finite-dimensional manifold $M$. Assume that $f$ satisfies Bowen's
specification property. Then for all $f-$invariant measure $\mu$
and for all $\epsilon >0$,
 $$\mathcal{S}_{\epsilon}(\mu)\neq \emptyset \ \mbox{ and } \  A_{\epsilon}(\mu)\neq \emptyset.$$ \label{proposition1}
\end{proposition}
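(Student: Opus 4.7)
\emph{Plan.} Since $\mathcal{V}_f(x)$ is nonempty for every $x\in M$ (as $\mathcal{M}$ is weak$^*$ compact), we have $\mathcal{S}_\epsilon(\mu)\subset A_\epsilon(\mu)$, so it suffices to exhibit a point in $\mathcal{S}_\epsilon(\mu)$. I will in fact argue the classical Sigmund-type statement that, under Bowen's specification, every $\mu\in\mathcal{M}_f$ admits a generic point --- i.e.\ a point $x$ with $\delta_{x,n}\to\mu$ in weak$^*$ --- and then $\mathcal{V}_f(x)=\{\mu\}\subset\mathcal{N}_\epsilon(\mu)$ for every $\epsilon>0$.

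Fix $\eta>0$. First, I would approximate $\mu$ within $\eta$ in $\mathrm{dist}^*$ by a rational convex combination of ergodic measures,
\[
\mathrm{dist}^*\!\Big(\sum_{i=1}^{k}\frac{p_i}{q}\mu_i,\;\mu\Big)<\eta,\qquad \mu_i\in\mathcal{E}_f,\ p_i\in\mathbb{N},\ \sum_i p_i=q,
\]
using that the convex hull of ergodic measures is weak$^*$-dense in $\mathcal{M}_f$ (ergodic decomposition / Krein--Milman). For each $i$ pick, by Birkhoff, a generic point $x_i$ for $\mu_i$ and an $N_0\in\mathbb{N}$ with $\mathrm{dist}^*(\delta_{x_i,n},\mu_i)<\eta$ for all $n\geq N_0$. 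Choose $\delta>0$ so that any two orbit segments of equal length staying $\delta$-close along their length have empirical measures at $\mathrm{dist}^*$-distance below $\eta$ (by uniform continuity of countably many test functions inducing weak$^*$), and let $m=m(\delta)$ be the specification gap.

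Second, I would build $x$ by concatenation. At each stage $s\geq 1$ pick a block length $L_s\geq N_0$ and consider the template orbit consisting, in order, of $p_1$ successive blocks of length $L_s$ along the orbit of $x_1$, then $p_2$ blocks along $x_2$, and so on up to $x_k$, with gaps of $m$ steps between consecutive blocks. By Bowen's specification applied with parameter $\delta$, every finite truncation of this template is $\delta$-shadowed by some periodic point; a standard diagonal/compactness argument across $s\to\infty$ produces a single $x\in M$ whose forward orbit $\delta$-shadows the entire infinite template.

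The main obstacle is verifying that $\delta_{x,n}\to\mu$ at \emph{every} sequence $n\to\infty$, not merely along subsequences. Given large $n$, split $\{0,\ldots,n-1\}$ into completed blocks (each $\delta$-close to an $L_s$-orbit of some $x_i$, hence with empirical measure within $2\eta$ of $\mu_i$), gaps of length $m$, and a single incomplete trailing block. The gap contribution is $O\!\big(m\cdot(\#\text{blocks})/n\big)$, which vanishes provided $L_s\to\infty$; the incomplete-tail contribution is controlled by choosing the $L_s$ to grow slowly enough that $L_{s+1}=o\big(\sum_{j\leq s}L_j\big)$ (e.g.\ $L_s=s$), so a stage-$s{+}1$ tail cannot bias the average built up through stage $s$. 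With these choices the block proportions converge to $p_i/q$ for each $i$, and the triangle inequality yields $\mathrm{dist}^*(\delta_{x,n},\mu)<C\eta$ for all sufficiently large $n$; since $\eta$ was arbitrary, $\delta_{x,n}\to\mu$, which places $x$ in $\mathcal{S}_\epsilon(\mu)\subset A_\epsilon(\mu)$ for every $\epsilon>0$.
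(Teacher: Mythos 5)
Your approach is correct in spirit but takes a genuinely different and considerably heavier route than the paper. The paper's proof is short: by Sigmund's theorem, under Bowen's specification the ergodic measures (indeed the periodic-orbit measures) are weak$^*$-dense in $\mathcal{M}_f$; so given $\epsilon>0$ pick an ergodic $\nu$ with $\mathrm{dist}^*(\mu,\nu)<\epsilon$, note that $B(\nu)\neq\emptyset$ by Birkhoff since $\nu$ is ergodic, and observe $B(\nu)\subset\mathcal{S}_\epsilon(\mu)$. You instead go directly after a (quasi-)generic point for $\mu$ itself by approximating $\mu$ with a rational convex combination of ergodic measures, choosing Birkhoff-generic points for each, concatenating long orbit blocks via specification, and estimating empirical measures along all times $n$. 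That is essentially reproving the stronger half of Sigmund's theory (every invariant measure has a generic point under specification) rather than invoking the density statement. It buys you more (potentially $\mathcal{S}_0(\mu)\neq\emptyset$ for every $\mu$, not just ergodic ones) at the cost of the block-length bookkeeping, gap/tail estimates, and the compactness/diagonal passage to the infinite template, none of which is needed for the proposition as stated.

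There is one logical slip to flag. Your construction of $x$ depends on the fixed tolerance $\eta$: the rational approximation $\sum_i (p_i/q)\mu_i$, the points $x_i$, the block lengths, and hence the shadowing point all change with $\eta$. So what your argument actually gives is, for each $\eta$, a point $x_\eta$ with $\limsup_n\mathrm{dist}^*(\delta_{x_\eta,n},\mu)\leq C\eta$; the closing sentence ``since $\eta$ was arbitrary, $\delta_{x,n}\to\mu$'' is not justified for a single $x$. To genuinely produce a generic point one would have to let the approximation tolerance tend to zero along the stages of the concatenation (choose $\eta_s\to 0$, a better rational decomposition and longer blocks at stage $s$), so that the improving accuracy is built into one infinite template. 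For the proposition, however, the fixed-$\eta$ version already suffices: given $\epsilon>0$, take $\eta<\epsilon/C$ to get $x_\eta\in\mathcal{S}_\epsilon(\mu)\subset A_\epsilon(\mu)$. So the result you need does follow from your construction, but the claimed convergence $\delta_{x,n}\to\mu$ does not, as written.
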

\begin{proof} Since
$\mathcal{S}_{\epsilon}(\mu) \subset
 A_{\epsilon}(\mu)$, it is enough to prove that $\mathcal{S}_{\epsilon}(\mu)\neq
\emptyset$ for all $\epsilon>0$.  By hypothesis $f$ satisfies
Bowen's specification property. Then, the set of ergodic measures
$\mathcal{E}_f$ (precisely the subset of the invariant measures
that are supported on  periodic orbits)
 is dense in the space of $f-$invariant measures
$\mathcal{M}_f$ (see the main theorem in \cite{sig1} or
\cite{sig2} for a proof). Thus, for  any $\mu\in \mathcal{M}_f$
and any $\epsilon>0$, there exists an ergodic measure $\nu\in
\mathcal{N}_{\epsilon}(\mu)$. The basin $B(\nu)$  is not empty
since $\nu$ is ergodic and thus
 $\mathcal{V}_f(x) = \{\nu\}$ for $\nu$-a.e. point $x \in M$. Besides, from $\nu \in \mathcal N_{\epsilon}(\mu)$ we obtain that $B(\nu) \subset \mathcal S_{\epsilon}(\mu)$. We deduce that
 $ \mathcal{S}_{\epsilon}(\mu) \neq \emptyset $,  ending the proof.
\end{proof}

In order to answer question 2, we also need to revisit the
definition of observable measures for  a subsystem.

\begin{definition} \em
Let $B\subset M$ be a forward invariant set, i.e. $f(B)\subset B$,
that has positive Lebesgue measure. A probability measure $\mu$ is
\em observable for $f\mid_B$, \em if for all $\epsilon>0$ the
following set
\[
A_{\epsilon}(B, \mu):=\{ x\in B: \mathcal{V}_f(x)\cap
\mathcal{N}_{\epsilon}(\mu)\neq \emptyset\}
\]
has positive Lebesgue measure. Let $\mathcal{O}_{f\mid_B}$ denote
the set of all the observable measures for $f\mid_B$.
\end{definition}

The following definition of Milnor-like attractor was introduced
in \cite{c}.

\begin{definition} \em
Let $K\subset M$ be a nonempty, compact and $f-$invariant set,
i.e. $f^{-1}(K)=K$. We say that $K$ is a \em Milnor-like attractor
\em if the   following set
\[
B(K):=\{x\in M: \liminf_{n\rightarrow\infty}\frac 1 n \sharp
\{0\leq j \leq n-1: f^j(x)\in \mathcal{N}_\epsilon (K)
\}=1,\forall \epsilon>0\}
\]
has positive Lebesgue measure, where $\sharp A$ denotes the
cardinality of a set $A$ and $\mathcal{N}_\epsilon (K)$ denotes
the $\epsilon-$neighborhood of $K$, i.e., $\mathcal{N}_\epsilon
(K):=\bigcup_{x\in K}B(x,\epsilon)$ and $B(x,\epsilon)$ is a ball
of radius $\epsilon$ centered at $x$. The set $B(K)$ is called the
basin of $K$.
\end{definition}

Note that the $\liminf$ equal to one in the definition above,
implies that the limit exists and is equal to one.

For each $0<\alpha\leq 1$, the Milnor-like attractor $K$ is called
\em $\alpha-$observable \em if $m(B(K))\geq \alpha$. An
$\alpha-$observable Milnor-like attractor \em is minimal, \em if
it has no proper subsets that are also $\alpha-$observable
Milnor-like attractor for the same value of $\alpha$.

We restate here, just for completeness, the following result (see
\cite{ce,c} for a proof):

\begin{theorem}\label{srb} \em{ (
\cite{ce,c})}\em Let $f:M\rightarrow M$ be a continuous map on a
compact, finite-dimensional manifold $M$, and let $B\subset M$ be
a forward invariant set that has positive Lebesgue measure. Then
the following properties hold:
\begin{itemize}
\item[(1)] The set $\mathcal{O}_f$ of all observable measures for
$f$ is nonempty, and minimally weak$^*$ compact containing for
Lebesgue almost all $x\in M$, all the weak$^*$ limits of the
convergent subsequences of  empirical   measures.

\item[(2)] For each $0<\alpha\leq 1$, there exists a minimal
$\alpha-$observable Milnor-like attractor.

\item[(3)]The set $\mathcal{O}_{f\mid_B}$ is weak$^*$ compact and
nonempty.

\item[(4)] The set $\mathcal{O}_{f\mid_B}$ is the minimal weak$^*$
compact set in the space $\mathcal{M} $ such that
$\mathcal{V}_f(x)\subset \mathcal{O}_{f\mid_B}$ for Lebesgue
almost all $x\in B$.
\end{itemize}
\end{theorem}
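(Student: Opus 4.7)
The plan is to establish (1) first, since its techniques transfer directly to (3) and (4), and to handle (2) via a separate Zorn's Lemma argument on compact invariant sets. The unifying idea is that observable measures form the unique minimal weak$^*$-compact subset of $\mathcal{M}$ that absorbs the empirical-measure limits $\mathcal{V}_f(x)$ for Lebesgue almost every $x \in M$.

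For (1), weak$^*$ compactness of $\mathcal{O}_f$ is routine: if $\mu_n \to \mu$ with each $\mu_n$ observable, then $\mu_n \in \mathcal{N}_{\epsilon/2}(\mu)$ eventually, so the inclusion $A_{\epsilon/2}(\mu_n) \subset A_\epsilon(\mu)$ forces $m(A_\epsilon(\mu))>0$. Nonemptiness follows by covering $\mathcal{M}$ with finitely many $\epsilon$-balls and applying pigeonhole across $\epsilon \downarrow 0$: at least one ball must meet $\mathcal{V}_f(x)$ for a positive-Lebesgue-measure set of $x$, and a diagonal extraction yields an observable measure. The minimal-containment assertion then requires two complementary claims. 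First, that $\mathcal{V}_f(x) \subset \mathcal{O}_f$ for Lebesgue a.e.\ $x$: if the exceptional set $E$ had positive measure, then since $\mathcal{M} \setminus \mathcal{O}_f$ is open and second-countable, one could cover it by countably many balls $\mathcal{N}_{\epsilon_k}(\nu_k)$ with $m(A_{\epsilon_k}(\nu_k))=0$; but then $E \subset \bigcup_k A_{\epsilon_k}(\nu_k)$ would be null. Second, that any weak$^*$-compact $K$ with $\mathcal{V}_f(x) \subset K$ a.e.\ must contain $\mathcal{O}_f$: given $\mu \in \mathcal{O}_f$ and $\epsilon>0$, the set $A_\epsilon(\mu)$ meets the full-measure locus $\{x : \mathcal{V}_f(x) \subset K\}$, producing a point of $K \cap \mathcal{N}_\epsilon(\mu)$, after which $\epsilon \downarrow 0$ and weak$^*$ closedness of $K$ yield $\mu \in K$.

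Parts (3) and (4) follow by the same two arguments with $M$ replaced by the forward-invariant positive-measure set $B$: since $f(B) \subset B$, the empirical measures of points in $B$ remain meaningful, and the covering and pigeonhole steps apply verbatim with ambient measure $m|_B$.

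For (2), I would apply Zorn's Lemma to the family of nonempty compact $f$-invariant sets $K \subset M$ with $m(B(K)) \geq \alpha$, ordered by reverse inclusion, with nonemptiness supplied by $M$ itself. The main obstacle is showing that a decreasing chain has an $\alpha$-observable intersection $K_\infty = \bigcap_\lambda K_\lambda$, since this demands interchanging the chain intersection with the frequency condition $\liminf_n \frac{1}{n} \sharp\{0 \le j < n : f^j(x) \in \mathcal{N}_\epsilon(K_\lambda)\} = 1$. Passing to a countable cofinal subchain (using second-countability of $M$, so that $\mathcal{N}_\epsilon(K_\infty) \supset K_\lambda$ for $\lambda$ large by compactness of $M$), and applying monotone convergence in $\lambda$ for each fixed $\epsilon$ followed by an exhaustion $\epsilon \downarrow 0$ along a countable sequence, one concludes $m(B(K_\infty)) \geq \alpha$. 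Zorn then produces a minimal $\alpha$-observable Milnor-like attractor.
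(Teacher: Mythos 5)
The paper does not prove Theorem~\ref{srb} at all: it is explicitly labeled a restatement of results from \cite{ce} and \cite{c}, with the reader referred there for proofs. So there is no in-paper argument to compare your proposal against, and the review below can only assess your reconstruction on its own merits.

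Your arguments for (1), (3), and (4) are essentially correct and match the standard strategy in \cite{ce}: closedness of $\mathcal{O}_f$ from the inclusion $A_{\epsilon/2}(\mu_n)\subset A_\epsilon(\mu)$; nonemptiness via a finite $\epsilon$-cover of $\mathcal{M}$, pigeonhole, and diagonal extraction; the almost-everywhere containment $\mathcal{V}_f(x)\subset\mathcal{O}_f$ via a Lindel\"of covering of the open complement $\mathcal{M}\setminus\mathcal{O}_f$ by null basins; and minimality by intersecting $A_\epsilon(\mu)$ with the full-measure set $\{x:\mathcal{V}_f(x)\subset K\}$. The relativization to $B$ with $m\mid_B$ indeed goes through verbatim because $f(B)\subset B$. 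For (2), your Zorn scheme on compact invariant sets under reverse inclusion is the right framework and matches \cite{c}; the crucial identity $B(K_\infty)=\bigcap_\lambda B(K_\lambda)$ and the passage to a countable decreasing subchain are stated correctly. Two small points you gloss over: you should note that $B(K)$ is Borel (restrict $\epsilon$ to rationals, observe each frequency condition is a $\liminf$ of indicator-averages of open sets), and what you call ``monotone convergence in $\lambda$'' is really continuity from above for the decreasing sequence $B(K_{n_1})\supset B(K_{n_2})\supset\cdots$, which needs finiteness of $m$ (available, since $m$ is a probability). With those small clarifications the proposal is a sound reconstruction of the cited proofs.
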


Now we state the main theorems of this paper. We will give their
proofs  in the next section.

The first theorem states that a  subadditive potential has
negative largest  growth rates at all the state points $x$
belonging to the strong basin of $\epsilon-$attraction of an
invariant measure, for some $\epsilon>0$ (and thus for all
$\epsilon >0$ small enough).

\begin{theorem} \label{theoremA} Let $f:M\rightarrow M$ be a continuous map on a
compact, finite-dimensional manifold $M$, $\mu$ an $f-$invariant
measure, and $\Phi=\{ \phi_n\}_{n\geq 1}$ a  subadditive
potential. If the growth rates of $\Phi$ satisfy
\[
\Phi^*(x):=\lim\limits_{n\rightarrow\infty}
 \frac{1}{n}  \phi_n (x)<0~~~\mu-{\hbox{a.e.}}~x\in M,
\]
then there exists $\epsilon>0$ such that $\widetilde{\Phi}^*(x)<0$
for each point $x$ in $\mathcal{S}_{\epsilon}(\mu)$.
\end{theorem}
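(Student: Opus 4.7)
The plan is to reduce the statement to a Birkhoff-type argument. First I would extract from the hypothesis a scale $N$ at which $\frac{1}{N}\int \phi_N\, d\mu$ is strictly negative; then dominate $\phi_n(x)/n$ by the ergodic average of $\phi_N$ along the $f$-orbit of $x$ (up to an $o(1)$ error) via an averaging trick that exploits subadditivity; and finally observe that for $x\in \mathcal{S}_\epsilon(\mu)$ the empirical measures $\delta_{x,n}$ eventually lie in $\mathcal{N}_\epsilon(\mu)$, so that this ergodic average is close to $\int\phi_N\, d\mu$ and hence still negative. The hypothesis $\Phi^*<0$ $\mu$-a.e.\ forces $\int \Phi^*\, d\mu<0$, because $\{\Phi^*<-1/k\}$ must have positive $\mu$-measure for some $k$ while $\Phi^*\leq 0$ on the rest. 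Combined with $\int\Phi^*\, d\mu=\inf_{n\geq 1} \frac{1}{n}\int\phi_n\, d\mu$ (the subadditive Kingman theorem recalled in the introduction), this produces $N\geq 1$ and $\beta>0$ with $\frac{1}{N}\int\phi_N\, d\mu<-2\beta$. Since $\phi_N$ is continuous and $M$ is compact, the map $\nu\mapsto \int\phi_N\, d\nu$ is weak$^*$ continuous on $\mathcal{M}$, so one may pick $\epsilon>0$ so small that $\frac{1}{N}\int\phi_N\, d\nu<-\beta$ for every $\nu\in\mathcal{N}_\epsilon(\mu)$.

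The main technical step, and the only place where subadditivity enters, is the pointwise inequality
\begin{equation*}
\phi_n(x)\leq \frac{1}{N}\sum_{l=0}^{n-1}\phi_N(f^l x) + C_N \qquad \text{for all } n\geq 1,\ x\in M,
\end{equation*}
with $C_N$ depending only on $N$ and the sup norms of $\phi_1,\ldots,\phi_N$. This is the averaging trick behind Dai's proof in \cite{dai}, and I expect it to be the hardest part to write cleanly. I would derive it by iterating subadditivity from each offset $j\in\{0,1,\ldots,N-1\}$: writing $n-j=q_jN+r_j$ one obtains
\begin{equation*}
\phi_n(x)\leq \phi_j(x)+\phi_{n-j}(f^j x)\leq \phi_j(x)+\sum_{k=0}^{q_j-1}\phi_N(f^{j+kN}x)+\phi_{r_j}(f^{j+q_jN}x),
\end{equation*}
and then average over $j$: as $(j,k)$ ranges the indices $j+kN$ exhaust $\{0,1,\ldots,n-1\}$ except for $O(N)$ boundary terms, whose total contribution is bounded uniformly in $n$ and $x$ because all the $\phi_r$ are continuous on the compact manifold.

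Finally, I would fix $x\in\mathcal{S}_\epsilon(\mu)$ and exploit that $\mathcal{V}_f(x)$ is exactly the set of weak$^*$ accumulation points of $\{\delta_{x,n}\}$ in the compact metric space $\mathcal{M}$. The inclusion $\mathcal{V}_f(x)\subset\mathcal{N}_\epsilon(\mu)$ together with the openness of $\mathcal{N}_\epsilon(\mu)$ forces $\delta_{x,n}\in\mathcal{N}_\epsilon(\mu)$ for all sufficiently large $n$, for otherwise a subsequential limit would lie outside. Combining this with the subadditive inequality and the choice of $\epsilon$,
\begin{equation*}
\frac{\phi_n(x)}{n}\leq \frac{1}{N}\int\phi_N\, d\delta_{x,n}+\frac{C_N}{n}<-\beta+\frac{C_N}{n}
\end{equation*}
for all sufficiently large $n$, so $\widetilde{\Phi}^*(x)\leq -\beta<0$. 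Note that $N$, $\beta$, $\epsilon$ and $C_N$ are all chosen before $x$, so the bound is in fact uniform over $\mathcal{S}_\epsilon(\mu)$, which is slightly stronger than what the theorem asks for.
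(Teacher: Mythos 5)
Your proof is correct and uses the same essential ingredients as the paper's: Kingman's theorem to extract a scale $N$ with $\frac{1}{N}\int\phi_N\,d\mu<0$, the averaging inequality $\phi_n(x)\leq C_N+\frac{1}{N}\sum_{l=0}^{n-1}\phi_N(f^l x)$ (the paper's Lemma~\ref{sub}), weak$^*$ continuity of $\nu\mapsto\int\phi_N\,d\nu$ to choose $\epsilon$, and the inclusion $\mathcal{V}_f(x)\subset\mathcal{N}_\epsilon(\mu)$. The differences are presentational streamlinings rather than a different route. First, the paper works with the truncated potential $\psi_n=\max\{-n,\phi_n\}$, which bounds everything and makes the reverse-Fatou step automatic; you avoid the truncation by noting directly that $\Phi^*<0$ $\mu$-a.e.\ forces $\int\Phi^*\,d\mu<0$ (possibly $-\infty$) and then invoking Kingman's identity $\int\Phi^*\,d\mu=\inf_n\frac{1}{n}\int\phi_n\,d\mu$, which works since each $\phi_n$ is continuous on a compact manifold and hence has finite integral. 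Second, the paper argues by contradiction, selecting a putative bad point $x_0\in\mathcal{S}_\epsilon(\mu)$ and extracting a subsequence $\delta_{x_0,n_i}\to\tilde\mu\in\mathcal{N}_\epsilon(\mu)$ that also realizes the $\limsup$; you argue directly, observing that compactness of $\mathcal{M}$ together with $\mathcal{V}_f(x)\subset\mathcal{N}_\epsilon(\mu)$ forces $\delta_{x,n}\in\mathcal{N}_\epsilon(\mu)$ for \emph{all} large $n$, not just along a subsequence, which cleanly yields $\widetilde\Phi^*(x)\leq-\beta$ uniformly over $\mathcal{S}_\epsilon(\mu)$ --- a mildly stronger conclusion than the theorem states.
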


\begin{remark}
\label{remarkA} \em Combined   with Proposition \ref{proposition1}
(which asserts that ${\mathcal S}_{\epsilon}(\mu) \neq \emptyset$)
the theorem above answers positively Question 1 of the
introduction, for any system that satisfies Bowen's specification
property and for any invariant measure $\mu$.
\end{remark}

The second theorem states that the subadditive potentials   have
negative {\it smallest  growth rates} in the basin of
$\epsilon-$attraction of any invariant measure, for some
$\epsilon>0$.
\begin{theorem}  \label{theoremB} Let $f:M\rightarrow M$ be a continuous map on a compact,
finite-dimensional manifold $M$, $\mu$ an $f-$invariant measure,
and $\Phi=\{ \phi_n\}_{n\geq 1}$ a subadditive potential. If the
growth rates of $\Phi$ satisfies
\[
\Phi^*(x):=\lim\limits_{n\rightarrow\infty}
 \frac{1}{n}  \phi_n (x)<0~~~\mu-{\hbox{a.e.}}~x\in M
\]
then there exists $\epsilon>0$ such that $\widehat{\Phi}^*(x)<0$
for each point $x$ in $A_{\epsilon}(\mu)$.
\end{theorem}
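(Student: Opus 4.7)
The plan is to adapt Dai's block-decomposition argument, exploiting the weak$^*$ continuity of integration against a single continuous function $\phi_{n_0}$. The non-ergodicity of $\mu$ is not an obstacle because Kingman's identity $\int \Phi^* \mathrm{d}\mu = \inf_{n\geq 1} \frac{1}{n} \int \phi_n \mathrm{d}\mu$ does not require it, and the weakening of the conclusion from $\widetilde{\Phi}^*<0$ to $\widehat{\Phi}^*<0$ precisely matches the fact that in $A_{\epsilon}(\mu)$ we control only one subsequence of empirical measures.

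First, I would fix the time scale and the radius. Since $\Phi^*(x)<0$ $\mu$-a.e., we have $\int \Phi^*\mathrm{d}\mu<0$, and Kingman's identity yields an integer $n_0\geq 1$ with
\[
\frac{1}{n_0}\int \phi_{n_0}\,\mathrm{d}\mu =: 2c < 0.
\]
Because $\phi_{n_0}\in C(M)$ and $M$ is compact, the map $\nu\mapsto \int \phi_{n_0}\mathrm{d}\nu$ is continuous on $(\mathcal{M},\mathrm{dist}^*)$, so one can pick $\epsilon>0$ with $\tfrac{1}{n_0}\int \phi_{n_0}\mathrm{d}\nu<c$ for every $\nu\in\mathcal{N}_{\epsilon}(\mu)$.

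Next, I would derive a subadditive block bound independent of the residue of $n$ modulo $n_0$. For each $0\leq s<n_0$ write $n-s=k_s n_0+r_s$ with $0\leq r_s<n_0$ and iterate $\phi_{a+b}(y)\leq \phi_a(y)+\phi_b(f^a y)$ to get
\[
\phi_n(x)\leq \phi_s(x)+\sum_{i=0}^{k_s-1}\phi_{n_0}(f^{s+in_0}x)+\phi_{r_s}(f^{s+k_s n_0}x).
\]
Averaging over $s=0,1,\dots,n_0-1$ and checking that each $j\in\{0,1,\dots,n-n_0\}$ admits a unique decomposition $j=s+in_0$ with $0\leq s<n_0$ and $0\leq i<k_s$, one obtains
\[
\phi_n(x)\leq \frac{1}{n_0}\sum_{j=0}^{n-n_0}\phi_{n_0}(f^j x)+C,
\]
where $C:=\max_{0\leq r\leq n_0}\sup_{y\in M}|\phi_r(y)|<\infty$ bounds the boundary contributions.

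Finally, for $x\in A_{\epsilon}(\mu)$ pick $\nu\in\mathcal{V}_f(x)\cap\mathcal{N}_{\epsilon}(\mu)$ and a sequence $n_k\to\infty$ with $\delta_{x,n_k}\to\nu$ weak$^*$. Since only $n_0-1$ terms separate $\sum_{j=0}^{n_k-n_0}\phi_{n_0}(f^j x)$ from $n_k\int \phi_{n_0}\mathrm{d}\delta_{x,n_k}$, dividing by $n_k$ erases this discrepancy in the limit, so
\[
\frac{1}{n_k n_0}\sum_{j=0}^{n_k-n_0}\phi_{n_0}(f^j x)\;\longrightarrow\;\frac{1}{n_0}\int \phi_{n_0}\,\mathrm{d}\nu.
\]
Combining with the block bound and taking the limit inferior gives
\[
\widehat{\Phi}^*(x)\leq \liminf_{k\to\infty}\frac{\phi_{n_k}(x)}{n_k}\leq \frac{1}{n_0}\int \phi_{n_0}\,\mathrm{d}\nu<c<0,
\]
which is the required conclusion. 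The only delicate point is the contrast with Theorem~\ref{theoremA}: here we have information about the empirical measures only along a single subsequence, so the bound propagates to $\liminf$ rather than to $\limsup$, which is why the conclusion correctly weakens from $\widetilde{\Phi}^*$ to $\widehat{\Phi}^*$.
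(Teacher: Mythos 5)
Your argument is correct and follows essentially the same strategy as the paper's proof: fix a time scale $n_0$ with $\frac{1}{n_0}\int\phi_{n_0}\,\mathrm{d}\mu<0$, use weak$^*$ continuity of $\nu\mapsto\int\phi_{n_0}\,\mathrm{d}\nu$ to choose $\epsilon$, and then combine the subadditive block decomposition (the paper's Lemma \ref{sub}, which you rederive inline) with convergence of empirical measures along the subsequence $\{n_k\}$. The only cosmetic differences are that you argue directly rather than by contradiction and obtain $n_0$ from Kingman's identity $\int\Phi^*\,\mathrm{d}\mu=\inf_n\frac{1}{n}\int\phi_n\,\mathrm{d}\mu$, whereas the paper first truncates $\psi_n=\max\{-n,\phi_n\}$ and applies Fatou's lemma to the bounded-below averages $\frac{1}{n}\psi_n$; both routes correctly handle the possibility $\int\Phi^*\,\mathrm{d}\mu=-\infty$.
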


\begin{remark}
\label{remarkB} \em

From theorem \ref{srb} we deduce that there always exist
observable invariant measures $\mu$. Applying  theorem
\ref{theoremB} to those
 measures, we obtain that the {\it smallest growth rates}
of the subadditive potential is negative on a set of positive
Lebesgue measure, for any continuous system.
\end{remark}

Moreover, the following theorem states that under mild stronger
conditions  the {\it largest
 growth rates}   are also negative for Lebesgue almost
all the points in the basin of any Milnor-like attractor:

\begin{theorem} \label{theoremC} Let $f:M\rightarrow M$ be a continuous map on a
compact, finite-dimensional manifold $M$, and $\Phi=\{
\phi_n\}_{n\geq 1}$ a  subadditive potential. Assume that $K$ is
an $\alpha-$observable Milnor-like attractor for some
$0<\alpha\leq 1$, and $B(K)$ is its  basin. If to each observable
measure $\mu\in \mathcal{O}_{f\mid_{B(K)}}$, the growth rates
\[
\Phi^*(x):=\lim\limits_{n\rightarrow\infty}
 \frac{1}{n}  \phi_n (x)<0~~~\mu-{\hbox{a.e.}}~x\in M
\]
then $\widetilde{\Phi}^*(x)<0$ for $m-$almost every $x\in B(K)$.
\end{theorem}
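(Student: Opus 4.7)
The plan is to combine the weak$^*$ compactness of $\mathcal{O}_{f\mid_{B(K)}}$ (Theorem \ref{srb}(3)) with its minimality property (Theorem \ref{srb}(4)) to produce a uniform integral estimate on an open weak$^*$ neighborhood of $\mathcal{O}_{f\mid_{B(K)}}$, and then to convert this into a pointwise bound on $\frac{1}{n}\phi_n(x)$ via an iterated subadditive decomposition in the spirit of Dai's proof of the earlier theorems.

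First, every $\mu \in \mathcal{O}_{f\mid_{B(K)}}$ is $f$-invariant, so Kingman's sub-additive ergodic theorem combined with the hypothesis $\Phi^*(x)<0$ $\mu$-a.e.\ gives $\inf_n \frac{1}{n}\int \phi_n\,d\mu = \int \Phi^*\,d\mu < 0$. Thus one can choose $n_\mu \in \mathbb{N}$ and $c_\mu>0$ with $\frac{1}{n_\mu}\int \phi_{n_\mu}\,d\mu < -2c_\mu$; by weak$^*$ continuity of $\nu \mapsto \int \phi_{n_\mu}\,d\nu$, the bound $< -c_\mu$ persists on an open neighborhood $U_\mu \subset \mathcal{M}$ of $\mu$. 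Using weak$^*$ compactness of $\mathcal{O}_{f\mid_{B(K)}}$, I extract a finite subcover $U_{\mu_1},\ldots,U_{\mu_k}$ and set $W=\bigcup_i U_{\mu_i}$, $n_i = n_{\mu_i}$, $c = \min_i c_{\mu_i}>0$. By Theorem \ref{srb}(4), for $m$-a.e.\ $x\in B(K)$ every weak$^*$ accumulation point of the empirical measures $\delta_{x,n}$ lies in $\mathcal{O}_{f\mid_{B(K)}}\subset W$; since $W$ is open in the weak$^*$-compact metric space $\mathcal{M}$, this forces $\delta_{x,n}\in W$ for all $n\geq N(x)$. For any such $n$ there exists $i$ with
\[
\frac{1}{n\,n_i}\sum_{j=0}^{n-1}\phi_{n_i}(f^j x) = \frac{1}{n_i}\int \phi_{n_i}\,d\delta_{x,n} < -c.
\]

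The main technical step is turning this ergodic-average estimate of $\phi_{n_i}$ into a bound on $\frac{1}{n}\phi_n(x)$. I plan to iterate the subadditive inequality $\phi_{a+b}(x)\leq \phi_a(x)+\phi_b(f^a x)$ with each shifted starting point: for $s\in\{0,\ldots,n_i-1\}$, writing $n = s + q_s n_i + r_s$ with $0\leq r_s < n_i$, one obtains
\[
\phi_n(x) \leq \phi_s(x) + \sum_{j=0}^{q_s-1}\phi_{n_i}(f^{s+jn_i}x) + \phi_{r_s}(f^{s+q_s n_i}x).
\]
Averaging over $s$, the double sum reconstructs $\sum_{m=0}^{n-1}\phi_{n_i}(f^m x)$ up to $O(n_i)$ boundary indices, while the edge terms $\phi_s(x),\phi_{r_s}(\cdot)$ are uniformly bounded because each $\phi_m$ with $m<n_i$ is continuous on the compact $M$. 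This yields
\[
\frac{1}{n}\phi_n(x) \leq \frac{1}{n_i}\int\phi_{n_i}\,d\delta_{x,n} + O(1/n) \leq -c + O(1/n),
\]
and the error is uniform in $i$ because only finitely many $n_i$'s appear. Hence $\frac{1}{n}\phi_n(x) < -c/2$ for all sufficiently large $n$, which is exactly $\widetilde{\Phi}^*(x)\leq -c/2<0$.

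The main obstacle I anticipate is the combinatorial book-keeping in the shift-averaging step: verifying that after summing the iterated-subadditivity inequality over $s\in\{0,\ldots,n_i-1\}$, each orbit index $m\in\{0,\ldots,n-1\}$ is counted exactly once up to an $O(n_i)$ boundary correction, so that the comparison with $\int \phi_{n_i}\,d\delta_{x,n}$ is clean and the $O(1/n)$ error is genuinely uniform in $i$. Beyond this accounting, the argument is essentially a direct marriage of Dai's subadditivity technique with the observable-measure framework of \cite{c,ce}.
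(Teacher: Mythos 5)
Your argument is correct, and it takes a route that is genuinely different in structure (though the same in spirit) from the paper's own proof of Theorem \ref{theoremC}. The paper proceeds by contradiction: it truncates the potential to $\psi_n=\max\{-n,\phi_n\}$, picks a single point $x_0$ in the hypothetical bad set $\mathcal D\subset B(K)$ with $\mathcal V_f(x_0)\subset\mathcal O_{f\mid_{B(K)}}$ (item (4) of Theorem \ref{srb}), applies Fatou's lemma to one limiting measure $\mu\in\mathcal V_f(x_0)$, and then uses Lemma \ref{sub} to reach a contradiction. You instead make essential use of the weak$^*$ compactness of $\mathcal O_{f\mid_{B(K)}}$ (item (3)) to build a finite open cover $W=\bigcup_i U_{\mu_i}$ with a uniform negative integral bound $-c$, and you show that for $m$-a.e.\ $x\in B(K)$ all but finitely many $\delta_{x,n}$ fall into $W$ (this step is valid because $\mathcal M\setminus W$ is compact, so an infinite subsequence of empirical measures escaping $W$ would produce an accumulation point outside $\mathcal O_{f\mid_{B(K)}}$, contradicting item (4)). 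This yields the stronger, quantitative conclusion $\widetilde\Phi^*(x)\leq -c/2$ uniformly for $m$-a.e.\ $x\in B(K)$, which the paper's proof does not state. Two additional remarks. First, the "iterated subadditive decomposition" that you flag as your main technical worry is, once averaged over the shift $s\in\{0,\dots,n_i-1\}$, exactly Lemma \ref{sub} of the paper; since only the finitely many values $l=n_1,\dots,n_k$ occur, taking $\tilde C=\max_i 4\max_{1\leq j\leq 2n_i}\|\phi_j\|_\infty$ gives the uniformity you need, and the book-keeping you were concerned about is already carried out there. Second, you avoid the truncation $\psi_n=\max\{-n,\phi_n\}$ by invoking Kingman directly to get $\inf_n\frac1n\int\phi_n\,d\mu<0$; this works because even when $\int\Phi^*\,d\mu=-\infty$ you only need some finite $n_\mu$ with $\frac{1}{n_\mu}\int\phi_{n_\mu}\,d\mu$ strictly negative, and $\phi_{n_\mu}$ is continuous hence integrable. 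Both approaches are sound; yours buys a uniform negative rate at the cost of a compactness argument, while the paper's is a lighter pointwise contradiction.
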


\begin{remark}
\label{remarkC} \em

Note that the Lebesgue measure of the basin $B(K)$ of the
Milnor-like attractor $K$ in the latter theorem is larger than or
equal to $\alpha$. Thus the {\it largest growth rates} of the
subadditive potentials is negative on a set with  Lebesgue measure
that is at least
 equal to $\alpha$.  This implies that the optimal state points $x$ (namely the points
 for which the largest growth rates are negative) cover a set that is Lebesgue $\alpha$-observable
 in the  manifold $M$. This answers positively Question 2 of the Introduction.

 Moreover, if  $K$ is a $1-$observable Milnor-like
attractor (such $K$ always exists after part (2) of theorem
\ref{srb}), then Theorem \ref{theoremC} asserts that the {\it
largest growth rates} of the subadditive potentials are negative
Lebesgue almost everywhere.
\end{remark}

To end this section, we state a useful known lemma. It appears in
many places; see  for example \cite{cao}. We give a proof here
just for completeness.

\begin{lemma} \label{sub}
Let $f:M\rightarrow M$ be a continuous map on a compact,
finite-dimensional manifold $M$, and $\Phi=\{ \phi_n\}_{n\geq 1}$
a  subadditive potential. Fix any positive integer $l$. Then
\[
\phi_n(x)\leq C+ \sum_{i=0}^{n-1} \frac{1}{l}\phi_l(f^ix)~~\forall
x\in M
\]
where $C$ is a constant depending only on $l$.
\end{lemma}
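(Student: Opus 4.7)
The plan is to exploit the subadditive inequality along all $l$ possible ``starting offsets,'' sum the resulting bounds, and absorb the boundary terms into a constant depending only on $l$.

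By continuity of $\phi_1,\dots,\phi_l$ on the compact manifold $M$, set $D:=\max_{0\leq k\leq l}\sup_{y\in M}|\phi_k(y)|$ (with the convention $\phi_0\equiv 0$); this is a finite constant depending only on $l$ and $\Phi$. For each offset $r\in\{0,1,\dots,l-1\}$, I would write $n-r=k_r l+s_r$ with $0\leq s_r<l$, and iterate the subadditive inequality $\phi_{a+b}(y)\leq\phi_a(y)+\phi_b(f^a y)$ to obtain
\[
\phi_n(x)\leq \phi_r(x)+\sum_{j=0}^{k_r-1}\phi_l(f^{r+jl}x)+\phi_{s_r}(f^{n-s_r}x).
\]

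Summing over $r=0,\dots,l-1$ and bounding the $2l$ boundary terms $\phi_r(x)$ and $\phi_{s_r}(f^{n-s_r}x)$ by $D$ each yields
\[
l\,\phi_n(x)\leq 2lD+\sum_{r=0}^{l-1}\sum_{j=0}^{k_r-1}\phi_l(f^{r+jl}x).
\]
The combinatorial key is that the indices $i=r+jl$ appearing in the double sum are pairwise distinct (they have distinct residues mod $l$ for different $r$), so the double sum is really a single sum over some subset $S\subset\{0,1,\dots,n-1\}$; a residue-class-wise count shows that at most $l-1$ indices of $\{0,\dots,n-1\}$ are missed (within each residue class $r\not\equiv n\pmod l$, exactly one index near $n$ is absent, and no index is missed in the class $r\equiv n\pmod l$). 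Using $|\phi_l|\leq D$ on the missed indices, the double sum is bounded by $\sum_{i=0}^{n-1}\phi_l(f^i x)+(l-1)D$. Dividing by $l$ then gives the claimed inequality with $C=2D+(l-1)D/l$, a constant depending only on $l$.

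The only minor obstacle is the combinatorial bookkeeping that identifies the indices missed in the double sum; once one notes that within each residue class modulo $l$ at most one index fails to appear, the bound $(l-1)D$ for the missing contribution is immediate. The remaining case $n<l$ is trivial, since $\phi_n(x)$ itself is bounded by $D$, so one enlarges $C$ by a constant multiple of $D$ if necessary to cover these finitely many small values of $n$.
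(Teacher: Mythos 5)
Your proof is correct and takes essentially the same approach as the paper: iterate subadditivity in $l$-blocks starting from each of the $l$ offsets, sum over the offsets, and absorb the boundary terms into a sup-norm constant depending only on $l$. The only difference is bookkeeping—the paper truncates every offset at the common index $(s-1)l$ (where $n=sl+k$) so the middle terms align into the contiguous range $\{0,\dots,(s-1)l-1\}$ at the cost of tails of length up to $2l$, whereas you run each offset as far as possible and then account for the at most $l-1$ indices of $\{0,\dots,n-1\}$ missed by the union; either way one extra sup-norm estimate bridges the gap to $\sum_{i=0}^{n-1}\phi_l(f^ix)$.
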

\begin{proof} Fix a positive integer $l$. For each natural number $n $, we
 write  $n=sl+k$, where $0\leq s, 0\leq k<l$. Then, for any
integer  $0\leq j<l$  we have
\[
\phi_n(x)\leq \phi_j(x)+\phi_l(f^jx)+\cdots +
\phi_l(f^{(s-2)l}f^jx)+\phi_{k+l-j}(f^{(s-1)l}f^jx),
\]
where $\phi_0(x)\equiv 0$. Let $C_1=\max_{j=1,\cdots
2l}||\phi_j||_{\infty}$.  Adding $\phi_n(x)$ when $j$ takes all
the natural values from $0$ to $l-1$, we have
\[
l\phi_n(x)\leq 2lC_1 + \sum_{i=0}^{(s-1)l-1} \phi_l(f^ix).
\]
Hence
\begin{eqnarray*}
\phi_n(x)\leq 2C_1+ \sum_{i=0}^{(s-1)l-1}
\frac{1}{l}\phi_l(f^ix)\leq 4C_1+ \sum_{i=0}^{n-1}
\frac{1}{l}\phi_l(f^ix).
\end{eqnarray*}
Choosing $C=4C_1$ the desired result follows.
\end{proof}

\section{Proofs of the main results}
This section provides the proofs of the theorems in section 2.

\subsection{Proof of theorem \ref{theoremA}}
\begin{proof}
Let $\mu$ be an $f-$invariant measure   that satisfies the
hypothesis of theorem \ref{theoremA}. The arguments here are
similar to those  of Dai in \cite{dai}. Let $\psi_n(x)=\max \{-n,
\phi_n(x)\}$ for all $n\geq 1$ and each $x\in M$. It is easy to
see that the sequence of functions  $\Psi=\{\psi_n\}$ is
subadditive. Set
\[
\widetilde{\Psi}^*(x):=\limsup_{n\rightarrow\infty}\frac 1 n
\psi_n(x) ~~~~\forall x \in M.
\]
Under the hypothesis of theorem \ref{theoremA}, it follows that
$\widetilde{\Psi}^*(x)<0$ for $\mu-$almost every $x\in M$. Since
$\psi_n(x)\geq \phi_n(x)$ for all $n\geq 1$ and all $x\in M$, to
prove theorem \ref{theoremA} it is enough to show that
$\widetilde{\Psi}^*(x)<0$ for all $x \in
\mathcal{S}_{\epsilon}(\mu)$ for some $\epsilon>0$.

Using the definition of $\Psi$ and the subadditivity of $\Psi$, we
have
\[
-1\leq \frac 1 n \psi_n(x)\leq ||\psi_1||_\infty~~~\forall x\in M.
\]
It follows from the Fatou lemma
 that
\[
\inf_{n\geq 1}\frac 1 n \int \psi_n
\mathrm{d}\mu=\lim_{n\rightarrow\infty}\frac 1 n \int \psi_n
\mathrm{d}\mu\leq \int \limsup_{n\rightarrow\infty}\frac 1 n
\psi_n(x) \mathrm{d}\mu=\int \widetilde{\Psi}^*(x)
\mathrm{d}\mu<0.
\]
 Therefore, there exists an integer $l\geq 1$
such that
\[
-1\leq \frac 1 l \int \psi_l \mathrm{d}\mu<0.
\]
For some sufficiently small $\eta >0$, say $\eta <\frac{|\frac 1 l
\int \psi_l \mathrm{d}\mu|}{2}$, fix a positive number
$\epsilon>0$ such that
\[
\mathrm{dist}^{*}(\mu,\nu)\leq \epsilon \Rightarrow | \int \frac 1
l \psi_l \mathrm{d}\mu-\int \frac 1 l \psi_l \mathrm{d}\nu|<\eta.
\]
If the strong basin of $\epsilon$-attraction of $\mu$ is empty,
i.e., $\mathcal{S}_{\epsilon}(\mu)=\emptyset$, then there is
nothing to prove. Otherwise, let
\[
\mathcal{D}=\{x\in \mathcal{S}_{\epsilon}(\mu):
\limsup_{n\rightarrow\infty}\frac 1 n \psi_n(x)\geq 0\}.
\]
We will prove that $\mathcal{D}=\emptyset$. Assume by
contradiction that there exists $x_0\in \mathcal{D}$. Since
$x_0\in \mathcal{D}\subset \mathcal{S}_{\epsilon}(\mu)$, choose a
subsequence  of integers $\{n_i\}$ such that $\delta_{x_0,n_i}$
converges weakly to a measure $\tilde{\mu}$ and
$\lim_{i\rightarrow\infty}\frac{1}{n_i}\psi_{n_i}(x_0)=\limsup_{n\rightarrow\infty}\frac
1 n \psi_n(x_0)$.  Note that $\tilde{\mu}\in
\mathcal{V}_f(x_0)\subset\mathcal{N}_{\epsilon}(\mu)$, i.e.,
$\mathrm{dist}^*(\mu,\tilde{\mu} )\leq \epsilon$. . It follows
that
\begin{eqnarray*}
0>\int \frac 1 l \psi_l \mathrm{d}\mu+\eta >\int \frac 1 l \psi_l
\mathrm{d}\tilde{\mu}=\lim_{i\rightarrow\infty} \int \frac 1 l
\psi_l \mathrm{d} \delta_{x_0,n_i}=\lim_{i\rightarrow\infty}
\frac{1}{n_i} \sum_{j=0}^{n_i-1} \frac 1 l \psi_l(f^jx_0).
\end{eqnarray*}
Using lemma \ref{sub}, we have
\[
\lim_{i\rightarrow\infty}\frac 1 n_i \psi_{n_i}(x_0)\leq
\lim_{i\rightarrow\infty}\frac 1 n_i \sum_{j=0}^{n_i-1} \frac 1 l
\psi_l(f^jx_0).
\]
Note that $x_0\in \mathcal{D}$, we have
\[
0>\lim_{i\rightarrow\infty} \frac 1 n_i \sum_{j=0}^{n_i-1} \frac 1
l \psi_l(f^jx_0)\geq \lim_{i\rightarrow\infty}\frac 1 n_i
\psi_{n_i}(x_0)\geq 0
\]
which is a contradiction. This completes the proof of theorem
\ref{theoremA}.
\end{proof}

\begin{corollary} \label{corollaryA}
Let $f:M\rightarrow M$ be a continuous map on a compact,
finite-dimensional manifold $M$,  and  $\Phi=\{ \phi_n\}_{n\geq
1}$  a  subadditive potential. Assume that there exists   a strong
observable measure $\mu$. If the growth rates of $\Phi$ satisfies
\[
\Phi^*(x):=\lim\limits_{n\rightarrow\infty}
 \frac{1}{n}  \phi_n (x)<0~~~\mu-{\hbox{a.e.}}~x\in M
\]
then   $\widetilde{\Phi}^*(x)<0$  on a set with positive Lebesgue
measure.
\end{corollary}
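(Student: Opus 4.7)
The plan is to apply Theorem \ref{theoremA} directly to the given strong observable measure $\mu$, and then invoke the definition of strong observability to pass to a Lebesgue-positive conclusion. The hypothesis on the growth rate, namely $\Phi^*(x) < 0$ for $\mu$-a.e.\ $x$, is precisely the hypothesis of Theorem \ref{theoremA}, so I would first simply quote that result to produce an $\epsilon > 0$ with the property that
\[
\widetilde{\Phi}^*(x) < 0 \quad \text{for every } x \in \mathcal{S}_{\epsilon}(\mu).
\]

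Next I would argue that $\mathcal{S}_{\epsilon}(\mu)$ has positive Lebesgue measure. This is immediate from Definition \ref{strongobs}: since $\mu$ is assumed to be a strong observable measure, the strong basin of $\epsilon$-attraction $\mathcal{S}_{\epsilon}(\mu)$ has positive Lebesgue measure for every $\epsilon > 0$, and in particular for the specific $\epsilon$ produced by Theorem \ref{theoremA}. Combining the two facts yields a set of positive Lebesgue measure on which $\widetilde{\Phi}^*(x) < 0$, which is the conclusion of the corollary.

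There is essentially no obstacle here: the corollary is a one-step synthesis of Theorem \ref{theoremA} (which supplies the pointwise negativity of the largest growth rate on $\mathcal{S}_{\epsilon}(\mu)$) and the definition of strong observability (which supplies Lebesgue positivity of $\mathcal{S}_{\epsilon}(\mu)$). The only point to emphasize in the write-up is that the $\epsilon$ obtained from Theorem \ref{theoremA} is an admissible choice in Definition \ref{strongobs}, so that the Lebesgue-positivity applies uniformly to any such $\epsilon$. The total argument should fit in a few lines.
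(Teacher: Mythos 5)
Your proof is correct and follows essentially the same argument as the paper: apply Theorem \ref{theoremA} to obtain an $\epsilon>0$ with $\widetilde\Phi^*<0$ on $\mathcal S_\epsilon(\mu)$, then invoke strong observability to conclude that this set has positive Lebesgue measure. The only detail worth adding explicitly is that a strong observable measure is automatically $f$-invariant (as the paper notes just after Definition \ref{observable}), which is the hypothesis needed to apply Theorem \ref{theoremA}.
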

\begin{proof}First note that $\mu$ is also $f-$invariant, thus
there exists $\epsilon>0$ such that $\widetilde{\Phi}^*(x)<0$ on
the set $\mathcal{S}_\epsilon(\mu)$, i.e.,  the strong basin of
$\epsilon-$attraction of $\mu$. And since $\mu$ is strong
observable, we have $m(\mathcal{S}_\epsilon(\mu))>0$. This
completes the proof of the corollary.
\end{proof}

\subsection{Proof of theorem \ref{theoremB}}
\begin{proof} Let $\mu$ be an $f-$invariant measure   that satisfies the hypothesis
of theorem \ref{theoremB}.  Let $\psi_n(x)=\max \{-n, \phi_n(x)\}$
for all $n\geq 1$ and each $x\in M$. As in the previous proof, the
sequence
  $\Psi=\{\psi_n\}$ is subadditive. Set
\[
\widehat{\Psi}^*(x):=\liminf_{n\rightarrow\infty}\frac 1 n
\psi_n(x) ~~~~\forall x \in M.
\]
Under the hypothesis of theorem \ref{theoremB}, it follows that
$\widehat{\Psi}^*(x)  <0$ for $\mu-$almost all $x$. Since
$\psi_n(x)\geq \phi_n(x)$ for all $n\geq 1$ and all $x\in M$, to
prove theorem \ref{theoremB} it is enough to show that
$\widehat{\Psi}^*(x)<0$ for each point in $A_{\epsilon}(\mu)$ for
some $\epsilon>0$.

Using the definition of $\Psi$ and the subadditivity of $\Psi$, we
have
\[
-1\leq \frac 1 n \psi_n(x)\leq ||\psi_1||_\infty~~~\forall x\in M.
\]
It follows from the Fatou lemma
 that
\[
\inf_{n\geq 1}\frac 1 n \int \psi_n
\mathrm{d}\mu=\lim_{n\rightarrow\infty}\frac 1 n \int \psi_n
\mathrm{d}\mu\leq \int \limsup_{n\rightarrow\infty}\frac 1 n
\psi_n(x) \mathrm{d}\mu<0.
\]
The last inequality holds since
$\limsup\limits_{n\rightarrow\infty}\frac 1 n \psi_n(x)  <0$ for
$\mu-$almost all $x\in M$. Therefore, there exists an integer
$l\geq 1$ such that
\[
-1\leq \frac 1 l \int \psi_l \mathrm{d}\mu<0.
\]
For some sufficiently small $\eta >0$, say $\eta <\frac{|\frac 1 l
\int \psi_l \mathrm{d}\mu|}{2}$, fix a positive number
$\epsilon>0$ such that
\[
\mathrm{dist}^{*}(\mu,\nu)\leq \epsilon \Rightarrow | \int \frac 1
l \psi_l \mathrm{d}\mu-\int \frac 1 l \psi_l \mathrm{d}\nu|<\eta.
\]
If the basin of $\epsilon$-attraction of $\mu$ is empty, i.e.,
$A_{\epsilon}(\mu)=\emptyset$, then there is nothing to prove.
Otherwise, let
\[
\mathcal{D}=\{x\in A_{\epsilon}(\mu):
\liminf_{n\rightarrow\infty}\frac 1 n \psi_n(x)\geq 0\}.
\]
We will prove that $\mathcal{D}=\emptyset$. Assume by
contradiction that there exists $x_0\in \mathcal{D}$. Since
$x_0\in \mathcal{D}\subset A_{\epsilon}(\mu)$, there exists
$\tilde{\mu}\in \mathcal{V}_f(x_0)$ such that
$\mathrm{dist}^*(\mu,\tilde{\mu} )\leq \epsilon$. Choose a
subsequence  of integers $\{n_i\}$ such that $\delta_{x_0,n_i}$
converges weakly to the measure $\tilde{\mu}$. It follows that
\begin{eqnarray*}
0>\int \frac 1 l \psi_l \mathrm{d}\mu+\eta >\int \frac 1 l \psi_l
\mathrm{d}\tilde{\mu}=\lim_{i\rightarrow\infty} \int \frac 1 l
\psi_l \mathrm{d} \delta_{x_0,n_i}=\lim_{i\rightarrow\infty} \frac
1 n_i \sum_{j=0}^{n_i-1} \frac 1 l \psi_l(f^jx_0).
\end{eqnarray*}
Using lemma \ref{sub}, we have
\[
\liminf_{i\rightarrow\infty}\frac 1 n_i \psi_{n_i}(x_0)\leq
\lim_{i\rightarrow\infty}\frac 1 n_i \sum_{j=0}^{n_i-1} \frac 1 l
\psi_l(f^jx_0).
\]
Note that $x_0\in \mathcal{D}$, we have
\[
0>\lim_{i\rightarrow\infty} \frac 1 n_i \sum_{j=0}^{n_i-1} \frac 1
l \psi_l(f^jx_0)\geq \liminf_{i\rightarrow\infty}\frac 1 n_i
\psi_{n_i}(x_0)\geq \liminf_{n\rightarrow\infty}\frac 1 n
\psi_{n}(x_0)\geq 0
\]
which is a contradiction. This completes the proof of theorem
\ref{theoremB}.
\end{proof}

\begin{corollary} \label{CorollaryB}
Let $f:M\rightarrow M$ be a continuous map on a compact,
finite-dimensional manifold $M$,  and $\Phi=\{ \phi_n\}_{n\geq 1}$
a  subadditive potential. Let $\mu$ be any (always existing)
observable measure for $f$. If the growth rates of $\Phi$ satisfy
\[
\Phi^*(x):=\lim\limits_{n\rightarrow\infty}
 \frac{1}{n}  \phi_n (x)<0~~~\mu-{\hbox{a.e.}}~x\in M
\]
then   $\widehat{\Phi}^*(x)<0$  on a set with positive Lebesgue
measure.
\end{corollary}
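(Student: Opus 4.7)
The plan is to reduce the corollary to a direct application of Theorem \ref{theoremB}, using only the defining property of observable measures and the fact that they are automatically $f$-invariant. First I would invoke part (1) of Theorem \ref{srb} to justify the parenthetical remark that observable measures always exist, so the hypothesis on $\mu$ is nonvacuous. Then I would recall (from the discussion following Definition \ref{observable}) that every observable measure lies in $\mathcal{M}_f$, since it is a weak$^*$ limit of probability measures each of which is in turn a weak$^*$ accumulation point of empirical measures, and $\mathcal{M}_f$ is weak$^*$ closed.

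Having established that $\mu \in \mathcal{M}_f$, the growth-rate hypothesis $\Phi^*(x)<0$ for $\mu$-a.e.\ $x$ is exactly the hypothesis of Theorem \ref{theoremB}. Applying that theorem to $\mu$, I obtain some $\epsilon>0$ such that
\[
\widehat{\Phi}^*(x)<0 \quad \text{for every } x\in A_{\epsilon}(\mu).
\]
Since $\mu$ is observable, the definition gives $m(A_{\epsilon}(\mu))>0$ for this $\epsilon$ (indeed for every positive $\epsilon$), so the set on which $\widehat{\Phi}^*$ is negative contains a set of positive Lebesgue measure. This closes the argument.

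There is essentially no obstacle: the work has already been done inside Theorem \ref{theoremB}, and the role of the corollary is only to package that result with the observability property to upgrade ``nonempty'' to ``positive Lebesgue measure.'' The only point that requires a line of explanation (and which is the closest thing to a subtlety) is the invariance of $\mu$, but this is immediate from the definitions and the weak$^*$ compactness of $\mathcal{M}_f$, and is already noted in the paper.
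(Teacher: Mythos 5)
Your proof is correct and follows essentially the same route as the paper: note that $\mu$ is $f$-invariant, apply Theorem \ref{theoremB} to obtain $\epsilon>0$ with $\widehat{\Phi}^*<0$ on $A_{\epsilon}(\mu)$, and use observability of $\mu$ to conclude $m(A_{\epsilon}(\mu))>0$. The paper's proof is identical in substance, though it leaves the existence of observable measures and the invariance argument implicit rather than citing Theorem \ref{srb} and spelling out the weak$^*$ closure of $\mathcal{M}_f$.
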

\begin{proof}First note that $\mu$ is also $f-$invariant, thus
there exists $\epsilon>0$ such that $\widehat{\Phi}^*(x)<0$ on the
set $A_\epsilon(\mu)$, i.e.,  the basin of $\epsilon-$attraction
of $\mu$. And since $\mu$ is observable, we have
$m(A_\epsilon(\mu))>0$. This completes the proof of the corollary.
\end{proof}

\subsection{Proof of theorem \ref{theoremC}}
\begin{proof} As in the proof of theorem \ref{theoremB},
define  $\psi_n(x)=\max \{-n, \phi_n(x)\}$ for all $n\geq 1$ and
each $x\in M$. Then the sequence of functions $\Psi=\{\psi_n\}$ is
a family of continuous functions which is  subadditive. Set
\[
\widetilde{\Psi}^*(x):=\limsup_{n\rightarrow\infty}\frac 1 n
\psi_n(x) ~~~~\forall x \in M.
\]
Under the hypothesis of theorem \ref{theoremC}, to  each
observable measure $\mu\in \mathcal{O}_{f\mid_B}$, it holds that
$\widetilde{\Psi}^*(x)<0$ for $\mu-$almost every $x\in M$. By the
definition of $\Psi$, it is easy to see that
$\widetilde{\Psi}^*(x)\geq \widetilde{\Phi}^*(x)$ for each $x\in
M$. So,  to prove theorem \ref{theoremC} it is enough to show that
$\widetilde{\Psi}^*(x)<0$ for $m-$almost every $x\in B(K)$, where
$m$ denotes the Lebesgue measure.

Since   the Milnor-attractor $K$ is $\alpha-$observable, its basin
$B(K)$ satisfies $m(B(K))\geq \alpha$. It is easy to check that
the basin $B(K)$ is forward invariant. Thus,  each of the
observable measures in the set ${\mathcal O} \mid _{f\mid_{B(K)}}$
satisfies Theorem \ref{srb}.

Let $\mathcal{D}=\{x\in B(K):
\limsup\limits_{n\rightarrow\infty}\frac 1 n \psi_n(x)\geq 0\}$.
To end the proof of Theorem \ref{theoremC} it  is now enough to
show that $m(\mathcal{D})=0$.

Assume by contradiction that $m(\mathcal{D})>0$. By the fourth
item of theorem \ref{srb}, we can choose a point $x_0\in
\mathcal{D}$ such that $\mathcal{V}_f(x_0)\subset
\mathcal{O}_{f\mid_{ B(K)}}$. We can take a subsequence of
integers $\{n_i\}$ such that $\delta_{x_0,n_i}$ converges weakly
to the measure $\mu\in \mathcal{V}_f(x_0)$ and
$\lim_{i\rightarrow\infty}\frac{1}{n_i}\psi_{n_i}(x_0)=\limsup_{n\rightarrow\infty}\frac
1 n \psi_n(x_0)$. As
\[
-1\leq \frac 1 n \psi_n(x)\leq ||\psi_1||_\infty~~~\forall x\in M,
\]
it follows from the Fatou lemma  that
\[
\inf_{n\geq 1}\frac 1 n \int \psi_n
\mathrm{d}\mu=\lim_{n\rightarrow\infty}\frac 1 n \int \psi_n
\mathrm{d}\mu\leq \int \limsup_{n\rightarrow\infty}\frac 1 n
\psi_n(x) \mathrm{d}\mu<0
\]
since $\mu\in \mathcal{O}_{f\mid_{ B(K)}}$, and by the definition
of $\Psi$  and the hypothesis of theorem \ref{theoremC}, we obtain
that
\[\limsup_{n\rightarrow\infty}\frac 1 n \psi_n(x) <0 \] for $\mu-$almost every point $x\in M$.  So, there exists an integer $l\geq 1$ such that
$ -1\leq \frac 1 l \int \psi_l \mathrm{d}\mu<0$, and further
\[
0>\frac 1 l \int \psi_l
\mathrm{d}\mu=\lim_{i\rightarrow\infty}\int \frac 1 l \psi_l
\mathrm{d}\delta_{x_0,n_i}=\lim_{i\rightarrow\infty} \frac{1}{n_i}
\sum_{j=0}^{n_i-1}\frac 1 l \psi_l(f^jx_0).
\]
Using lemma \ref{sub}, we have
\[
\lim_{i\rightarrow\infty}\frac{1}{n_i} \psi_{n_i}(x_0)\leq
\lim_{i\rightarrow\infty}\frac{1}{n_i} \sum_{j=0}^{n_i-1} \frac 1
l \psi_l(f^jx_0).
\]
Note that $x_0\in \mathcal{D}$, we have
\[
0>\lim_{i\rightarrow\infty} \frac{1}{n_i} \sum_{j=0}^{n_i-1} \frac
1 l \psi_l(f^jx_0)\geq \lim_{i\rightarrow\infty}\frac{1}{n_i}
\psi_{n_i}(x_0)\geq 0
\]
which is a contradiction. This completes the proof of theorem
\ref{theoremC}.
\end{proof}

Using the first item of theorem \ref{srb} and the same arguments
as in the proof of theorem \ref{theoremC}, we have the following
corollary.
\begin{corollary} \label{corollaryC} Let $f:M\rightarrow M$ be a continuous map on a
compact, finite-dimensional manifold $M$, and  $\Phi=\{
\phi_n\}_{n\geq 1}$
 a subadditive potential.  If for all observable
measure $\mu\in \mathcal{O}_{f}$, the growth rates
\[
\Phi^*(x):=\lim\limits_{n\rightarrow\infty}
 \frac{1}{n}  \phi_n (x)<0~~~\mu-{\hbox{a.e.}}~x\in M
\]
then $\widetilde{\Phi}^*(x)<0$ for Lebesgue almost all $x\in M$.
\end{corollary}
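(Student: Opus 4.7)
The plan is to imitate the proof of Theorem \ref{theoremC} almost verbatim, replacing the forward invariant set $B(K)$ by the whole manifold $M$ and using part (1) of Theorem \ref{srb} in place of part (4). The key new ingredient is that part (1) tells us that for Lebesgue almost every $x \in M$, every weak$^*$ limit of the empirical measures $\delta_{x,n}$ lies in $\mathcal{O}_f$; this lets us apply the hypothesis (which a priori only controls a $\mu$-null phenomenon for each observable $\mu$) to the specific limit measures that arise along subsequences at Lebesgue-typical points.

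First I would truncate: set $\psi_n(x) = \max\{-n, \phi_n(x)\}$ and check, as in the earlier proofs, that $\Psi = \{\psi_n\}$ is subadditive and that $-1 \leq \frac{1}{n}\psi_n(x) \leq \|\psi_1\|_\infty$ for all $n$ and $x$. Since $\psi_n \geq \phi_n$, it is enough to show $\widetilde{\Psi}^*(x) < 0$ for $m$-almost every $x \in M$. Define $\mathcal{D} := \{x \in M : \widetilde{\Psi}^*(x) \geq 0\}$ and argue by contradiction, assuming $m(\mathcal{D}) > 0$.

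Next I would apply Theorem \ref{srb}(1) to select $x_0 \in \mathcal{D}$ such that $\mathcal{V}_f(x_0) \subset \mathcal{O}_f$. Extract a subsequence $\{n_i\}$ with $\delta_{x_0,n_i} \to \mu$ weak$^*$ for some $\mu \in \mathcal{V}_f(x_0) \subset \mathcal{O}_f$ and $\frac{1}{n_i}\psi_{n_i}(x_0) \to \widetilde{\Psi}^*(x_0) \geq 0$. Since $\mu$ is observable and $\widetilde{\Psi}^* \geq \widetilde{\Phi}^*$, the hypothesis combined with the truncation gives $\widetilde{\Psi}^*(y) < 0$ for $\mu$-a.e. $y$. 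Using the uniform bound on $\frac{1}{n}\psi_n$ and subadditivity, Fatou's lemma yields
\[
\inf_{n \geq 1} \tfrac{1}{n}\int \psi_n\,d\mu = \lim_{n\to\infty} \tfrac{1}{n}\int \psi_n\,d\mu \leq \int \widetilde{\Psi}^*(y)\,d\mu(y) < 0,
\]
so there is an integer $l \geq 1$ with $\frac{1}{l}\int \psi_l\,d\mu < 0$.

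Finally, I would combine Lemma \ref{sub} with the weak$^*$ convergence $\delta_{x_0,n_i} \to \mu$ to conclude
\[
\widetilde{\Psi}^*(x_0) = \lim_{i\to\infty}\tfrac{1}{n_i}\psi_{n_i}(x_0) \leq \lim_{i\to\infty}\tfrac{1}{n_i}\sum_{j=0}^{n_i-1}\tfrac{1}{l}\psi_l(f^j x_0) = \tfrac{1}{l}\int \psi_l\,d\mu < 0,
\]
contradicting $x_0 \in \mathcal{D}$. I expect no genuine obstacle: the only step that differs from Theorem \ref{theoremC} is the appeal to Theorem \ref{srb}(1) in selecting $x_0$, which is precisely the statement that makes $\mathcal{O}_f$ (rather than $\mathcal{O}_{f\mid_{B(K)}}$) the correct compact set to capture the asymptotics of Lebesgue-typical empirical measures on the whole manifold.
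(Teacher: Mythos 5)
Your proof is correct and is exactly what the paper intends: the paper explicitly dispatches Corollary \ref{corollaryC} by invoking item (1) of Theorem \ref{srb} together with the argument of Theorem \ref{theoremC}, and you have carried out precisely that substitution, using $\mathcal{O}_f$ in place of $\mathcal{O}_{f\mid_{B(K)}}$ and $M$ in place of $B(K)$.
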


\section{Examples and additional remarks}
In \cite{ce} it is proved that observable measures exist for all
continuous systems. Nevertheless, the following  example
(attributed to Bowen \cite{gk,takens} and early cited in
\cite{ta}) shows that not all continuous dynamical systems (indeed
not all $C^2$ systems) have strong observable measures. So, in
this example   Corollary \ref{corollaryA} can not be applied.
Nevertheless we will prove that it still satisfies the final
assertion of that Corollary, since for Lebesgue almost all $x \in
M$, the growth rate $\widetilde \Phi^*(x) < 0$.

\begin{example} \em \label{exampleBowen}
Consider a $C^2$ diffeomorphism  $f$ in a compact ball $M$ of
$\mathbb{R}^2$ with two hyperbolic saddle points $A$ and $B$ in
the boundary $\partial M$ of $M$ such that (half) the unstable
global manifold $W^u(A)\setminus \{A\}$ is an embedded $C^2$ arc
that coincides with (half) the stable global manifold
$W^s(B)\setminus \{B\}$, conversely $W^s(A)\setminus
\{A\}=W^u(B)\setminus \{B\}$, and besides $\partial M = W^u(A)
\cup W^u(B)$. Take $f$ such that there exists a source $C\in U$
where $U$ is the topological open ball with boundary $W^u(A)\cup
W^u(B)$. One can choose $f$ such that for all $x\in U$ the
$\alpha-$limit is $\{C\}$ and the $\omega-$limit contains
$\{A,B\}$. See figure 1 in \cite{takens}. If the eigenvalues of
the derivative of $f$ at $A$ and $B$ are adequately chosen as
specified in \cite{gk,takens}, then  the sequence of empirical
measures  for any $x\in U\setminus \{C\}$ is not convergent. It
has at least two subsequences convergent to different convex
combinations of the Dirac measures $\delta_A$ and $\delta_B$. The
systems, as proved in \cite{gk}, satisfies the following property:

There exists a segment $\Gamma$ in the space of $f-$invariant
 measures, such that $\Gamma$ is a family of convex combinations of the two Dirac
 measures $\delta_A$ and $\delta_B$,  and $\mathcal{V}_f(x)=\Gamma$ for Lebesgue almost all points
 $x$.

 Therefore, as a corollary of the result above, we obtain:
\begin{proposition}
In the example \em \ref{exampleBowen}: \em
\begin{itemize}

\item[(A)] For Lebesgue almost all points, the sequence of
empirical measures does not converge.
 \item[(B)] All measures in
$\Gamma$ are observable according to definition \ref{observable}
and are the only observable measures,
 \item[(C)] There does not
exist strong observable measures according to definition
\ref{strongobs}. \em
\end{itemize}
\end{proposition}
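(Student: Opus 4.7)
The plan is to derive all three statements (A), (B), (C) directly from the property cited from \cite{gk}: for Lebesgue almost every $x \in M$, the set $\mathcal{V}_f(x)$ coincides with the nondegenerate segment $\Gamma$ of convex combinations of $\delta_A$ and $\delta_B$. The two structural facts I will repeatedly exploit are that $\Gamma$ contains at least two distinct measures (so it has positive weak$^*$ diameter) and that $\Gamma$ is weak$^*$ closed in the compact space $\mathcal{M}$.

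For part (A), the key remark is that a sequence in the compact metrizable space $(\mathcal{M},\mathrm{dist}^*)$ converges if and only if it admits a unique accumulation point. Since $\delta_A \neq \delta_B$ both lie in $\mathcal{V}_f(x) = \Gamma$ for Lebesgue almost every $x$, the empirical sequence $\{\delta_{x,n}\}$ has at least two weak$^*$ limits and hence cannot converge.

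For part (B), I would split the argument in two halves. Observability of each $\mu \in \Gamma$ is immediate from Definition \ref{observable}: for every $\epsilon>0$ and every $x$ in the full-Lebesgue-measure set on which $\mathcal{V}_f(x)=\Gamma$, we have $\mu \in \mathcal{V}_f(x)\cap \mathcal{N}_\epsilon(\mu)$, so $A_\epsilon(\mu)$ has full Lebesgue measure. Conversely, for any $\nu \notin \Gamma$, the weak$^*$ closedness of $\Gamma$ yields $d:=\mathrm{dist}^*(\nu,\Gamma)>0$; taking $\epsilon = d/2$, the set $\mathcal{V}_f(x)=\Gamma$ is disjoint from $\mathcal{N}_\epsilon(\nu)$ for Lebesgue almost every $x$, so $A_\epsilon(\nu)$ has Lebesgue measure zero and $\nu$ is not observable.

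For part (C), I would fix any $\mu \in \mathcal{M}$ and choose $\epsilon < \tfrac12 \mathrm{dist}^*(\delta_A,\delta_B)$, so that the ball $\mathcal{N}_\epsilon(\mu)$ cannot contain both Dirac masses and therefore cannot contain the entire segment $\Gamma$. Then for Lebesgue almost every $x$, $\mathcal{V}_f(x)=\Gamma\not\subset \mathcal{N}_\epsilon(\mu)$, so $\mathcal{S}_\epsilon(\mu)$ is Lebesgue-null and $\mu$ fails to be strong observable. I do not anticipate a substantive obstacle: once the cited fact from \cite{gk} is granted, each item reduces to a short definition-chase using only the positive diameter and the weak$^*$ closedness of $\Gamma$; the only mild point of care is separating the existence and uniqueness halves of (B).
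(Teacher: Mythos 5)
Your overall strategy mirrors the paper's: all three items are read off from the cited fact $\mathcal{V}_f(x)=\Gamma$ for Lebesgue-a.e.\ $x$. Item (A) is handled exactly as in the paper. For item (B) the paper simply refers to Example 5.5 of \cite{ce}, whereas you supply a short direct argument (full basin of $\epsilon$-attraction when $\mu\in\Gamma$; positive distance from the weak$^*$-compact set $\Gamma$ when $\nu\notin\Gamma$). That argument is correct and is a welcome unpacking of what the citation covers.

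There is, however, a genuine slip in your item (C). You pick $\epsilon<\tfrac12\,\mathrm{dist}^*(\delta_A,\delta_B)$ and argue that $\mathcal{N}_\epsilon(\mu)$ cannot contain both $\delta_A$ and $\delta_B$, and \emph{therefore} cannot contain $\Gamma$. That implication requires $\delta_A,\delta_B\in\Gamma$, which is not asserted in the paper and is in fact typically false for Bowen's example: $\Gamma$ is a segment of \emph{strict} convex combinations $t\,\delta_A+(1-t)\,\delta_B$, $t\in[a,b]$ with $0<a<b<1$, determined by the eigenvalues at $A$ and $B$. If $\Gamma$ is a short subsegment centered near $\tfrac12(\delta_A+\delta_B)$, a ball of radius just under $\tfrac12\,\mathrm{dist}^*(\delta_A,\delta_B)$ around that midpoint may well contain all of $\Gamma$, so your inequality does not force $\Gamma\not\subset\mathcal{N}_\epsilon(\mu)$. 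The fix is already in your own preamble: you note that $\Gamma$ has positive weak$^*$ diameter, so you should instead take $\epsilon<\tfrac12\,\mathrm{diam}^*(\Gamma)$; then no $\epsilon$-ball contains $\Gamma$, and for Lebesgue-a.e.\ $x$ (those with $\mathcal{V}_f(x)=\Gamma$) one gets $x\notin\mathcal{S}_\epsilon(\mu)$, so $\mathcal{S}_\epsilon(\mu)$ is Lebesgue-null. This is also closer in spirit to the paper's own argument, which invokes only that for a.e.\ $x$ the sequence of empirical measures has two distinct subsequential limits, not that the pure Dirac masses are themselves limits.
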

\begin{proof}
(A) is immediate from the fact  that $\mathcal{V}_f(x)=\Gamma$ for
Lebesgue almost all points
 $x$. We refer the proof of (B) to the example 5.5 in
  \cite{ce}. Finally, let us prove (C). Since for all $x\in U\setminus
 \{C\}$  the sequence of empirical   measures has at least two subsequences convergent to different convex
combinations of the Dirac measures $\delta_A$ and $\delta_B$, no
invariant  measure satisfies Definition \ref{strongobs} of strong
observability. In other words, there does not exist   strong
observable measures, because for any invariant measure the strong
basin of $\epsilon-$attraction is empty.\end{proof}
\begin{remark} \em
The proposition above shows that there exist dynamical systems for
which Theorem \ref{theoremA} and Corollary \ref{corollaryA} do not
give   information about the existence of optimal  state points
for the subadditive potentials. Nevertheless, if $\Phi^*(x) <0$
for $\mu$-a.e. just for one (not necessarily ergodic)  invariant
measure $\mu$, and if $\mu$ is some of the always existing
observable measures, then for those systems Theorem \ref{theoremB}
and Corollary \ref{CorollaryB} still ensure the existence of a
Lebesgue positive set of state points with negative smallest
growth rates.
\end{remark}
Moreover, in Example \ref{exampleBowen} we still have the
following very strong statement:

\begin{proposition} Let $f$ be the Bowen's example defined in \em
Example \ref{exampleBowen}, \em  $\mu$  an observable measure  for
$f$, and  $\Phi$ a subadditive potential with the growth rate
$\Phi^*(x) <0$ for $\mu$-a.e. $x$. Then, the largest growth rate
$\widetilde{\Phi}^*(x)$ is negative for Lebesgue almost all $x \in
M$. \em\end{proposition}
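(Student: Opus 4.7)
The plan is to mimic the scheme of the proof of Theorem \ref{theoremC}, exploiting the special structure of Bowen's example: the set of observable measures coincides with the one-dimensional segment $\Gamma$ of convex combinations of the two Dirac masses $\delta_A$ and $\delta_B$, and $\mathcal{V}_f(x)=\Gamma$ for Lebesgue almost every $x\in M$. First I would replace $\phi_n$ by the bounded subadditive sequence $\psi_n(x):=\max\{-n,\phi_n(x)\}$, so that $\widetilde{\Psi}^*\ge\widetilde{\Phi}^*$ pointwise and, by hypothesis, $\Psi^*(x)<0$ for $\mu$-a.e.\ $x$. It then suffices to establish $\widetilde{\Psi}^*(x)<0$ for Lebesgue almost every $x$.

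Next, writing the given observable measure as $\mu=t_0\delta_A+(1-t_0)\delta_B$ and decomposing it into its ergodic parts (the two Dirac masses at the fixed points $A$ and $B$), the assumption $\Phi^*(x)<0$ $\mu$-a.e.\ forces $\Phi^*(A)<0$ and $\Phi^*(B)<0$. Consequently, for every $\nu=s\delta_A+(1-s)\delta_B\in\Gamma$,
\begin{equation*}
\int\Phi^*\,d\nu=s\Phi^*(A)+(1-s)\Phi^*(B)<0.
\end{equation*}
Since $\nu\mapsto \frac{1}{l}\int\psi_l\,d\nu$ is continuous on the compact segment $\Gamma$ and decreases to $\int\Phi^*\,d\nu$ as $l\to\infty$, a compactness and Dini-type argument furnishes one integer $l\ge 1$ and a constant $\eta>0$ such that
\begin{equation*}
\int\tfrac{1}{l}\psi_l\,d\nu<-\eta\qquad \text{for every } \nu\in\Gamma.
\end{equation*}

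Finally, I would mirror the contradiction step of Theorem \ref{theoremC}. For Lebesgue almost every $x$, choose a subsequence $\{n_k\}$ realizing $\widetilde{\Psi}^*(x)=\lim_k \frac{1}{n_k}\psi_{n_k}(x)$ and, after a further extraction, assume $\delta_{x,n_k}$ converges weak-$*$ to some $\nu\in\mathcal{V}_f(x)=\Gamma$. Applying Lemma \ref{sub} to $\Psi$ with this $l$ and passing to the limit along $n_k$ yields
\begin{equation*}
\widetilde{\Psi}^*(x)\le\lim_{k\to\infty}\frac{1}{n_k}\sum_{j=0}^{n_k-1}\tfrac{1}{l}\psi_l(f^jx)=\int\tfrac{1}{l}\psi_l\,d\nu<-\eta<0.
\end{equation*}
The main obstacle I foresee is the deduction that \emph{both} $\Phi^*(A)<0$ and $\Phi^*(B)<0$ from a single hypothesis $\Phi^*(x)<0$ $\mu$-a.e.: when $\mu$ is a pure Dirac (the boundary case $t_0\in\{0,1\}$), only one of the two saddle values is directly controlled, and the uniform bound over $\Gamma$ in the second step becomes delicate; any complete argument must either show these boundary observable measures can be reduced to the interior case or treat them separately.
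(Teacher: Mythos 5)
Your plan follows the same essential strategy as the paper---reduce to the bounded truncation $\psi_n=\max\{-n,\phi_n\}$, deduce $\Phi^*(A)<0$ and $\Phi^*(B)<0$ from the hypothesis, and conclude that the largest growth rate is negative Lebesgue-a.e.\ by the Theorem \ref{theoremC} machinery---but you re-derive that machinery by hand, whereas the paper simply verifies the hypothesis of Corollary \ref{corollaryC} and invokes it. Concretely, once you know $\Phi^*(A)<0$ and $\Phi^*(B)<0$, every $\nu\in\mathcal{O}_f=\Gamma$ is a convex combination of $\delta_A$ and $\delta_B$, so $\Phi^*(x)<0$ holds $\nu$-a.e.\ for every observable $\nu$, and Corollary \ref{corollaryC} gives the result immediately; no uniform bound over $\Gamma$ is needed. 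Your Dini-type step is, in fact, suspect as stated: for a subadditive potential the sequence $l\mapsto\frac1l\int\psi_l\,d\nu$ converges to its infimum but is not in general monotone, so Dini's theorem does not apply directly, and the uniform choice of a single $l$ over all of $\Gamma$ would require a compactness/subadditivity argument you have not supplied. The cleaner route is to let $l$ depend on the particular limit measure $\nu\in\mathcal{V}_f(x_0)$, exactly as in the proof of Theorem \ref{theoremC}; uniformity is never used there.

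The ``boundary case'' you flag as the main obstacle is actually a non-issue, and the paper points this out explicitly. In Bowen's example the segment $\Gamma$ of accumulation measures of the empirical averages consists of \emph{proper} convex combinations $t\,\delta_A+(1-t)\,\delta_B$ with $t$ ranging over a compact subinterval strictly inside $(0,1)$ (this is precisely the content of the Golenishcheva--Kutuzova--Kleptsyn analysis cited in the paper). Hence every observable measure $\mu\in\Gamma$ has \emph{both} $\delta_A$ and $\delta_B$ as ergodic components with positive weight, and the hypothesis ``$\Phi^*(x)<0$ for $\mu$-a.e.\ $x$'' necessarily yields both $\Phi^*(A)<0$ and $\Phi^*(B)<0$. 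Neither $\delta_A$ nor $\delta_B$ alone is an observable measure for this map, so the degenerate case $t_0\in\{0,1\}$ simply does not occur. Once this is noted, your argument (and the paper's) goes through.
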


\begin{proof}
Any observable measure in this example, i.e. any $\mu \in \Gamma$,
has exactly two ergodic components, that are $\delta_A$ and
$\delta_B$, and has basin of $\epsilon-$attraction
$A_{\epsilon}(\mu)$ that covers Lebesgue almost all $M$.
 Since $\Phi^*(x) < 0$ for $\mu$-a.e. $x$, $\Phi^*(A) < 0$ and $\Phi^*(B) < 0$, because   $\mu$ is supported on $\{A,B\}$.
Therefore $\Phi^*(x) < 0$    $\nu$-a.e. for all other observable
measure $\nu$, because $\nu$ is a convex combination of $\delta_A$
and $\delta _B$. After Corollary \ref{corollaryC}, the largest
growth rate $\widetilde{\Phi}^*(x)$ is negative for Lebesgue
almost all $x \in M$.
\end{proof}

 \begin{remark} \em

 The proof above shows how Theorem \ref{theoremC} and its Corollary \ref{corollaryC} are   powerful  results,
  particularly useful if neither physical nor strong observable measure  exists. In fact, even if   the set of the observable measures components is
   uncountable (as in Example \ref{exampleBowen}),  the set of all their ergodic components may be finite and still the conclusion that
    Lebesgue almost all points are optimal states for a given subadditive potential, may hold. We recall from Definition \ref{strongobs} that if no
     strong observable measure exists, then no physical measure exists. And if no physical measure exists, then the (never empty) set of observable measures
      is necessarily uncountable (for a proof see \cite{ce}).
\end{remark}
\end{example}

\begin{example} \label{exampleMis} \em
In theorem 3.4   of \cite{mi} Misiurewicz proved that there exists
a $C^0$ topologically expanding map $f$ in the circle $S^1$ such
that for Lebesgue almost all $x \in S^1$ the limit set
  $\mathcal{V}_f(x)$ of the sequence of empirical measures is composed by all the (uncountably
infinitely many) $f-$invariant measures. Thus,
$\mathcal{V}_f(x)=\mathcal{O}_f=\mathcal{M}_f$ for Lebesgue almost
all $x \in S^1$. Then, it is easy to check  that there is no
strong observable measure in this example.

\begin{proposition}

\label{propositionMis} For the example \em \ref{exampleMis} \em of
Misiurewicz, there exist two observable ergodic invariant measures
$\mu$ and $\nu$, and a  subadditive potential $\Phi=\{\phi_n\}_n$,
such that the following properties hold:
\begin{itemize}
\item[(i)] $\Phi^*(x):=\lim_{n\rightarrow\infty}\frac 1 n
\phi_n(x)<0$ for $\mu-$a.e. $x$.

\item[(ii)] The smallest growth rates $\widehat{\Phi}^*(x)<0$ for
all $x\in A_{\epsilon}(\mu)$ for all $\epsilon >0$ small enough.

\item[(iii)] The largest growth rates $\widetilde{\Phi}^*(x)>0$
for   all $x\in A_{\epsilon}(\nu)$ for all $\epsilon >0$ small
enough.
\end{itemize}
\end{proposition}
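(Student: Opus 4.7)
The plan is to realize $\Phi$ as a purely additive potential
$\phi_n(x)=\sum_{j=0}^{n-1}\phi(f^j x)$
for a cleverly chosen continuous $\phi\colon S^1\to\R$, then obtain the
two measures by separating a pair of distinct ergodic invariant measures
via $\phi$. Because $\phi$ is continuous, each $\phi_n$ is continuous; and
the additive sequence is trivially subadditive, so $\Phi$ is a legitimate
subadditive potential. Note that since $f$ is only $C^0$ topologically
expanding, the familiar Lyapunov-type potential $\log|f'|$ is unavailable,
and using a general continuous $\phi$ sidesteps this.

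First I would record that in this example every $f$-invariant measure is
already observable: the paper states
$\mathcal{V}_f(x)=\mathcal{O}_f=\mathcal{M}_f$ for Lebesgue-a.e.\ $x$, so for
any $\mu\in\mathcal{M}_f$ and any $\epsilon>0$ the set $A_\epsilon(\mu)$
has full Lebesgue measure. Next, because $\mathcal{M}_f$ is uncountable in
this example, the ergodic decomposition forces the existence of at least
two distinct ergodic measures $\mu_1,\mu_2$: a unique ergodic measure
would, by ergodic decomposition, force $\mathcal{M}_f$ to be a singleton.
Since continuous functions separate points of $\mathcal{M}_f$ (by the
Riesz representation theorem), pick a continuous $\phi_0$ with
$\int \phi_0\, d\mu_1<\int \phi_0\, d\mu_2$ and subtract a constant
$c$ strictly between these two numbers, obtaining $\phi:=\phi_0-c$ with
$\int \phi\, d\mu_1<0<\int \phi\, d\mu_2$. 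Set $\mu:=\mu_1$,
$\nu:=\mu_2$, and define $\Phi=\{\phi_n\}$ additively as above.

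Property (i) is immediate from Birkhoff's ergodic theorem applied to the
ergodic $\mu$: $\Phi^*(x)=\int \phi\, d\mu<0$ for $\mu$-a.e.\ $x$.
Property (ii) is then exactly the conclusion of
Theorem \ref{theoremB} applied to $\mu$ and $\Phi$: there is an
$\epsilon_1>0$ such that $\widehat{\Phi}^*(x)<0$ on $A_{\epsilon_1}(\mu)$,
and monotonicity $A_{\epsilon'}(\mu)\subset A_{\epsilon_1}(\mu)$ for
$\epsilon'\le\epsilon_1$ passes this conclusion to all sufficiently small
$\epsilon$. For property (iii) I would apply Theorem \ref{theoremB} to
the potential $-\Phi=\{-\phi_n\}$ (still additive, hence subadditive) and
the measure $\nu$: the growth rate $(-\Phi)^*(x)=-\int\phi\, d\nu<0$
$\nu$-a.e., so the theorem produces an $\epsilon_2>0$ with
$\widehat{(-\Phi)}^*(x)<0$ on $A_{\epsilon_2}(\nu)$. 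Since
$\widehat{(-\Phi)}^*(x)=-\widetilde{\Phi}^*(x)$ pointwise, this
rearranges to $\widetilde{\Phi}^*(x)>0$ on $A_{\epsilon_2}(\nu)$, and
hence on $A_\epsilon(\nu)$ for every $\epsilon\le\epsilon_2$.

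The entire argument is essentially a repackaging of Theorem \ref{theoremB}
together with the observation that the negation of an additive
subadditive potential is again a subadditive potential. I do not
anticipate a serious obstacle; the only step with any content is the
construction of the two distinct ergodic measures and the separating
continuous function $\phi$, which is a purely structural consequence of
uncountability of $\mathcal{M}_f$ plus Riesz representation. All remaining
properties, including the observability of $\mu$ and $\nu$, are
then either immediate from the setup of Misiurewicz's example or direct
applications of the already-proved Theorem \ref{theoremB}.
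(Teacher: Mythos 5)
Your proof is correct, and its skeleton is the same as the paper's: build an \emph{additive} potential $\phi_n=\sum_{j<n}g\circ f^j$ from a single continuous $g$, arrange for the Birkhoff averages of $g$ to have opposite signs for two distinct ergodic measures $\mu$ and $\nu$, note that in Misiurewicz's example all invariant measures are observable, and then apply Theorem \ref{theoremB} once to $\Phi$ and $\mu$ (for (ii)) and once to $-\Phi$ and $\nu$ (for (iii)), using $\widehat{(-\Phi)}^*=-\widetilde\Phi^*$. The only place you deviate is in producing $\mu$, $\nu$, and $g$: the paper exploits the concrete structure of the example --- density of periodic orbits --- to take $\mu$ and $\nu$ as the equidistributions on two periodic orbits $\mathcal O_1,\mathcal O_2$, and constructs $g$ by Urysohn so that $g\equiv -1$ on $\mathcal O_1$ and $g\equiv 1$ on $\mathcal O_2$, which makes $\Phi^*\equiv\pm 1$ on the respective orbits with no appeal to Birkhoff at all. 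You instead deduce the existence of two distinct ergodic measures abstractly from the uncountability of $\mathcal M_f$, separate them by a continuous $\phi_0$ via Riesz representation, and shift by a constant. Your version is marginally more abstract and would work in any example where $\mathcal M_f$ is non-trivial and all invariant measures are observable; the paper's version is more elementary and explicit for this specific example. Both are sound.
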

\begin{proof}

 The map $f$ of
Misiurewicz has a dense set of periodic orbits (see Theorem 3.4 of
\cite{mi}). Choose two of those periodic orbits, say $
\mathcal{O}_1$ and $ \mathcal{O}_2$ and a real continuous function
$g: S^1\to \mathbb{R}$ such that $g(x) =-1$ for all $x\in
\mathcal{O}_1$ and $g(x) = 1$ for all $x \in \mathcal{O}_2$.

Define $\phi_n(x) =\sum_{i=0}^{n-1}g(f^ix)$. Note that $\Phi=
\{\phi_n\}_{n \geq 1}$ is an additive potential, since $\phi_{n+
m} = \phi_n + \phi_m(f^n)$ for all natural numbers $n $ and $ m $.
Therefore $\{\phi_n\}_n$ and  $\{-\phi_n\}_n$ are also subadditive
potentials.

The measure $\mu$ supported on $\mathcal{O}_1$ and equally
distributed in all the points of $\mathcal{O}_1$ is ergodic. And
besides it is observable because all invariant measures are
observable for $f$. Furthermore, by construction $\frac 1 n
\phi_n(x)=-1$ for
 all $x\in \mathcal{O}_1$, so $\Phi^*(x)=-1<0$ for $\mu-$a.e. $x$,
 proving (i). Therefore, applying theorem \ref{theoremB}, the assertion (ii) follows for some $\epsilon >0$, and after the definition of basin $A_{\epsilon} (\mu)$ of $\epsilon$- attraction, the assertion (ii) is proved
 for all $\epsilon>0$ small enough.

 Now it is left to prove (iii). Consider the measure $\nu$
 supported on $\mathcal{O}_2$ and equally distributed in all the points of $\mathcal{O}_2$. Similar arguments
 to those used with $\mu$ lead to the conclusion that $\nu$ is observable ergodic
 and $  {\Phi}^*(x)=+1$ for all $x\in \mathcal{O}_2$. Besides,  for all $x \in \mathcal{O}_2$, $- {\Phi}^*(x) = \Psi^*(x)$, where $\Psi := \{-\phi_n\}_{n \geq 1}$.
Therefore, ${\Psi}^*(x) = -1$ for $\nu$-a.e. point. Applying again
theorem \ref{theoremB} we obtain that $\widehat \Psi^*(x) < 0$ for
all $x \in A_{\epsilon}(\nu)$, for all $\epsilon >0$ small enough.
Since $\widehat \Psi^*(x) = - \widetilde \Phi^*(x)$, we conclude
that $\widetilde \Phi^*(x) >0$ for all $x \in A_{\epsilon}(\nu)$
for all $\epsilon >0$ small enough.   This
 completes the proof of (iii).
 \end{proof}
\end{example}

\begin{corollary}

  In the   example \em \ref{exampleMis} \em of Misiurewicz, if $\Phi$ is the  subadditive potentials   of Proposition \em \ref{propositionMis}, \em  then
$\Phi^*(x) < 0$ for $\mu$-a.e., but for Lebesgue almost all points
$x \in S^1$:
$$\widehat \Phi^*(x) < 0, \ \ \ \ \widetilde \Phi^*(x) >0.$$
\end{corollary}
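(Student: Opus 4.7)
The plan is to derive the corollary directly from Proposition \ref{propositionMis} together with the defining feature of Misiurewicz's example recalled just above it: for Lebesgue almost every $x \in S^1$ one has $\mathcal{V}_f(x) = \mathcal{M}_f$. The first assertion, $\Phi^*(x) < 0$ for $\mu$-a.e.\ $x$, is nothing but item (i) of that proposition, so no new work is needed there.

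For the remaining two assertions, I would proceed as follows. Let $E \subset S^1$ be the full-Lebesgue-measure set on which $\mathcal{V}_f(x) = \mathcal{M}_f$. Both ergodic measures $\mu$ (uniformly distributed on the periodic orbit $\mathcal{O}_1$) and $\nu$ (uniformly distributed on $\mathcal{O}_2$) constructed in the proof of Proposition \ref{propositionMis} lie in $\mathcal{M}_f$, hence in $\mathcal{V}_f(x)$ for every $x \in E$. In particular, for every $\epsilon > 0$ and every $x \in E$,
\[
x \in A_\epsilon(\mu) \cap A_\epsilon(\nu).
\]
Items (ii) and (iii) of Proposition \ref{propositionMis} then yield at once $\widehat\Phi^*(x) < 0$ and $\widetilde\Phi^*(x) > 0$ on the full-measure set $E$, which is exactly the claim of the corollary.

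A self-contained verification, bypassing Theorem \ref{theoremB}, is equally short and perhaps more transparent: since $\Phi$ is \emph{additive} with $\phi_n(x) = \sum_{i=0}^{n-1} g(f^i x)$ and $g$ is continuous, one has $\frac{1}{n}\phi_n(x) = \int g\,\mathrm{d}\delta_{x,n}$. For $x \in E$, take a subsequence $\{n_i\}$ with $\delta_{x,n_i} \to \mu$ in the weak$^*$ sense; then $\frac{1}{n_i}\phi_{n_i}(x) \to \int g\,\mathrm{d}\mu = -1$, so $\widehat\Phi^*(x) \leq -1$. An analogous subsequence converging to $\nu$, which is supported where $g \equiv +1$, gives a subsequential limit equal to $+1$, whence $\widetilde\Phi^*(x) \geq 1$.

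I do not foresee any serious obstacle: the Misiurewicz property that $\mathcal{V}_f(x) = \mathcal{M}_f$ Lebesgue a.e.\ is the single non-trivial ingredient, and it is imported as a known fact from \cite{mi}. The only point needing a brief check is that the Lebesgue-full set on which $\mathcal{V}_f(x) = \mathcal{M}_f$ is contained in $A_\epsilon(\mu) \cap A_\epsilon(\nu)$ for every $\epsilon > 0$, which is immediate from the definition of the basins of $\epsilon$-attraction.
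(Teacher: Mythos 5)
Your proof is correct and follows essentially the same route as the paper: invoke Misiurewicz's property $\mathcal{V}_f(x)=\mathcal{M}_f$ Lebesgue-a.e.\ to conclude that $A_\epsilon(\mu)$ and $A_\epsilon(\nu)$ are Lebesgue-full for every $\epsilon>0$, then apply items (ii) and (iii) of Proposition \ref{propositionMis}. Your added direct computation via $\frac{1}{n}\phi_n(x)=\int g\,\mathrm{d}\delta_{x,n}$ is a pleasant and transparent alternative that bypasses Theorem \ref{theoremB}, but the core argument is the same.
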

\begin{proof}
Since $\mathcal{V}_f(x)=\mathcal{M}_f$ for Lebesgue almost all
point $x \in
 S^1$, $\mathcal{V}_f(x)$ intersects $A_{\epsilon}(\mu)$ for all $\epsilon>0$
 for Lebesgue almost all $x$ and for all $\mu \in \mathcal M_f$. This implies that for all positive
 $\epsilon$ and for all pair of invariant measures $\mu$ and $\nu$,
 $A_{\epsilon}(\mu)=A_{\epsilon }(\nu) = S^1$ up to   sets of zero
 Lebesgue measures. Hence, the assertion (ii) of Proposition \ref{propositionMis}
 implies that   $\widehat{\Phi}^*(x)<0$ for  Lebesgue almost  all $x\in
S_1$. Analogously the   assertion (iii) implies that $\widetilde
\Phi^*(x) >0$  for Lebesgue almost all $x \in S_1$.
\end{proof}
\begin{remark} \em
After the Corollary above, the example \ref{exampleMis} of
Misiurewicz shows that  the assertion $\Phi^*(x) <0$ for $\mu$
a.e., assumed in the hypothesis of Theorem \ref{theoremB} and
Corollary \ref{CorollaryB} can be satisfied,  but the  conclusions
of those two results stating that the smallest growth rate is
negative, can not be improved, since the largest growth rates may
be positive, as in this concrete example,
 for Lebesgue almost all $x$ in the  manifold.

Therefore, the last example shows that the conclusion of Theorem
\ref{theoremB} can not be strengthened
 in general. In this sense  Theorem
\ref{theoremB} and Corollary \ref{CorollaryB} are optimally stated
if one wishes them to hold for all the continuous systems.
\end{remark}

\section*{Acknowledgments}Authors are grateful to  anonymous referees for their comments, which
helped to improve the text. The first author thanks ANII and CSIC of the Universidad de la Rep\'{u}blica, Uruguay, for the partial financial support. The second author also would like to
take this opportunity to thank ICTP and NSFC for giving him the
financial support to attend the activity-School and Conference on
Computational Methods in Dynamics held in ICTP.


\end{document}